\newtheorem{thm}{Theorem}[section]
\newtheorem{corollary}[thm]{Corollary}
\newtheorem{fact}[thm]{Fact}
\newtheorem{example}[thm]{Example}
\newtheorem{lemma}[thm]{Lemma}
\newtheorem{proposition}[thm]{Proposition}
\newtheorem{remark}[thm]{Remark}
\numberwithin{equation}{section}
\newcommand{\N}{\mathbb{N}}
\newcommand{\Z}{\mathbb{Z}}
\newcommand{\R}{\mathbb{R}}
\begin{document}

\title{Infinite derivatives of the Takagi-Van der Waerden functions}

\date{\today\ (\the\time)}

\author{Juan Ferrera}
\address{IMI, Departamento de An\'alisis Matem\'atico y Matem\'atica Aplicada, Facultad de Ciencias Matem\'aticas, Universidad Complutense, 28040, Madrid, Spain}
\email{ferrera@mat.ucm.es}

\author{Javier G\'omez Gil}
\address{Departamento de An\'alisis Matem\'atico y Matem\'atica Aplicada, Facultad de Ciencias Matem\'aticas, Universidad Complutense, 28040, Madrid, Spain}
\email{gomezgil@mat.ucm.es}

\author{Jes\'us Llorente}
\address{Departamento de An\'alisis Matem\'atico y Matem\'atica Aplicada, Facultad de Ciencias Matem\'aticas, Universidad Complutense, 28040, Madrid, Spain}
\email{jesllore@ucm.es}

\keywords{Takagi-Van der Waerden functions, infinite derivatives, Hausdorff dimension, Lebesgue measure.}

\begin{abstract}
In this paper we characterize the set of points where the lateral derivatives of the Takagi-Van der Waerden functions are infinite.
We also prove that the set  of points with infinite derivative has Hausdorff dimension one and Lebesgue measure zero. 
\end{abstract}

\thanks{The authors were partially supported by Grant
  MTM2015-65825-P}

\maketitle

\section{Introduction}

Takagi-Van der Waerden functions are an immediate generalization of the Takagi function (see \cite{T})
and they constitute a family of continuous nowhere differentiable functions. The first proof that we know of this fact can be found in \cite{BE}, other proofs can be found
in \cite{A} or in a more general setting in \cite{FGG}. The surveys \cite{AK} and \cite{L} are very good references on the Takagi function, its properties and generalizations.

For every integer $r\geq 2$, the Takagi-Van der Waerden function $f_r:[0,1]\to\mathbb{R}$ is defined as 
\begin{equation*}
f_r(x)=\sum_{n=0}^{\infty}\frac{1}{r^n}\phi(r^n x)
\end{equation*}
where $\phi(x)$ denotes the distance from the point $x$ to the nearest integer. 
Let us observe that  $f_2$ is the Takagi function, 
and $f_{10}$ is the Van der Waerden function (see \cite{Waerden}). 
This family of functions has been studied by many authors such as H. Whitney \cite{W}, 
J. B. Brown and G. Kozlowski \cite{BK}, A. Shidfar and K. Sabetfakhri \cite{SS}, Y. Baba \cite{B} or the authors we will mention hereunder.

The first two authors in \cite{FGG} introduce a generalization of the Takagi-Van der Waerden functions, 
named Generalized Takagi-Van der Waerden functions, which is defined on a separable real Hilbert space, and they also study 
the differentiation of the functions belonging to this generalization.

It is natural to ask about the set of points at which the Takagi-Van der Waerden functions 
possess a left-sided, right-sided or two-sided infinite derivative (see \cite{AK}). 
That is the issue we will try to address in this paper.  
Regarding this topic, M. Kr\"uppel \cite{KR1} and P. C. Allaart and K. Kawamura \cite{AK1} 
provide a complete characterization of the sets of points where the Takagi function possesses an infinite derivative. 
Also, P. C. Allaart and K. Kawamura \cite{AK1} prove that those sets of points have Hausdorff dimension one.

The aim of this paper is to fully characterize the set of points at which 
the Takagi-Van der Waerden function has an infinite derivative. 
In this sense, we will generalize the results obtained by M. Kr\"uppel \cite{KR1} and P. C. Allaart and K. Kawamura \cite{AK1}. 
Besides, we will prove that the set of points with infinite derivative has Hausdorff dimension one and Lebesgue measure zero.

Throughout this paper, we will write $f_r$ in terms of the corresponding 
Generalized Takagi-Van der Waerden function. 
For this reason, we briefly present this generalization defined on $[0,1]$.

Let $D$ be  a countable and dense subset of $[0,1]$. 
Let us consider $\mathcal{D}=\{ D_n\} _n$  an increasing sequence 
of finite subsets of $D$ satisfying that $D=\cup_n D_n$. 
We will say that $\mathcal{D}$ is a decomposition of $D$. 
Furthermore, we will denote the family of all connected components of 
$[0,1]\setminus D_n$ by $\mathcal{F}_n$ and by $\mathcal{F}$ the union of all the
families $\mathcal{F}_n$.

If $\mathcal{L}$ denotes the Lebesgue measure on $\R$, 
we will also require the following restriction on the decomposition $\mathcal{D}$: 
$$
 \mathcal{L}(I_n)\leq \alpha _n,
$$
for every $I_n\in \mathcal{F}_n$, where $\{ \alpha _n\}_n \in \ell ^1$ is a sequence of positive numbers satisfying the inequalities  
$2\alpha _{n+1}\leq \alpha _n$ for every $n$.    

For every decomposition $\mathcal{D}=\{ D_n\}$ of $D$, the Generalized Takagi-Van der Waerden function 
$T_{\mathcal{D}}:[0,1]\to \mathbb{R}$  is defined as
$$
T_{\mathcal{D}}(x)=\sum_{n=1}^{\infty} g_n(x)
$$
where $g_n(x)=\text{dist}(x,D_n)$ denotes the distance from $x$ to the set $D_n$. 
In \cite{FGG}, the following result was formulated  for a separable real Hilbert space when some restrictions on the decomposition $\mathcal{D}$ are required. However, in the case of $\mathbb{R}$ these restrictions are not necessary.

\begin{thm}\label{JOCA}
If $x\in D$, then $\partial T_{\mathcal{D}}(x)=\mathbb{R}$, otherwise $\partial T_{\mathcal{D}}(x)=\emptyset$.
\end{thm}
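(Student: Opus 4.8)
We use the description of the Fréchet (Dini) subdifferential $\partial f(x)$ of a continuous function at an interior point: a direct computation gives $\partial f(x)=[\,\overline{D}^{-}f(x),\ \underline{D}^{+}f(x)\,]\cap\R$, where $\overline{D}^{-}f(x)=\limsup_{h\to0^{-}}\frac{f(x+h)-f(x)}{h}$ and $\underline{D}^{+}f(x)=\liminf_{h\to0^{+}}\frac{f(x+h)-f(x)}{h}$. Thus Theorem~\ref{JOCA} is equivalent to the assertions: $\overline{D}^{-}T_{\mathcal D}(x)=-\infty$ and $\underline{D}^{+}T_{\mathcal D}(x)=+\infty$ when $x\in D$, and $\overline{D}^{-}T_{\mathcal D}(x)>\underline{D}^{+}T_{\mathcal D}(x)$ when $x\notin D$. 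The plan is to prove both using only the elementary structure of the building blocks $g_n=\operatorname{dist}(\cdot,D_n)$: each $g_n$ is $1$-Lipschitz, non-negative, and concave --- in fact affine, or a single tent --- on every interval whose interior avoids $D_n$; and $\|g_n\|_\infty\le\alpha_n/2$, so $T_{\mathcal D}=\sum_n g_n$ converges uniformly and $\alpha_n\to0$.

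For $x\in D$, let $N$ be the least index with $x\in D_N$ and split $T_{\mathcal D}=L+R_N$ near $x$, with $L=\sum_{1\le n<N}g_n$ and $R_N=\sum_{n\ge N}g_n$. Since each $D_n$ is finite and $x\notin D_n$ for $n<N$, $L$ is piecewise affine near $x$, hence has finite one-sided derivatives there. Now $R_N\ge0$, $R_N(x)=0$, and the crucial step is that $R_N$ has right derivative $+\infty$ and left derivative $-\infty$ at $x$. Indeed $x\in D_n$ for every $n\ge N$; letting $\rho_n^{\pm}$ be the distance from $x$ to the nearest point of $D_n$ strictly to its right, resp.\ left, the $\rho_n^{\pm}$ are non-increasing and tend to $0$ because $D=\bigcup_n D_n$ is dense. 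Hence for small $t>0$ the point $x$ is the nearest point of $D_n$ to $x+t$ for all $n$ in an initial block $N\le n\le M(t)$ with $M(t)\to\infty$, so $g_n(x+t)=t$ there and $R_N(x+t)\ge\bigl(M(t)-N+1\bigr)t$, giving $R_N(x+t)/t\to+\infty$; the left side is symmetric. Adding the finite one-sided slopes of $L$ yields $\underline{D}^{+}T_{\mathcal D}(x)=+\infty$, $\overline{D}^{-}T_{\mathcal D}(x)=-\infty$, i.e.\ $\partial T_{\mathcal D}(x)=\R$.

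For $x\notin D$, for each $n$ let $I_n=(a_n,b_n)$ be the component of $[0,1]\setminus D_n$ containing $x$; then $a_n\uparrow x$, $b_n\downarrow x$ (since $\mathcal L(I_n)\le\alpha_n\to0$), and for all large $n$ one has $a_n,b_n\in D_n$. Consider the one-sided secants $\sigma_n^{+}=\dfrac{T_{\mathcal D}(b_n)-T_{\mathcal D}(x)}{b_n-x}$ and $\sigma_n^{-}=\dfrac{T_{\mathcal D}(a_n)-T_{\mathcal D}(x)}{a_n-x}$. For $k\ge n$ the function $g_k$ vanishes at $a_n$ and $b_n$, so only $L^{(n)}:=\sum_{k<n}g_k$ contributes at those endpoints, while at $x$ there is the extra term $R_n(x):=\sum_{k\ge n}g_k(x)\ge g_n(x)=\operatorname{dist}(x,D_n)=\min(x-a_n,\,b_n-x)$. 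Since $I_n$ avoids $D_k$ for all $k<n$, each such $g_k$ --- hence $L^{(n)}$ --- is concave on $[a_n,b_n]$, so its chord slope over $[a_n,x]$ is at least its chord slope over $[x,b_n]$. Expanding $\sigma_n^{-}-\sigma_n^{+}$, the $L^{(n)}$-part is then $\ge0$ and the remaining part equals $R_n(x)\bigl(\tfrac{1}{x-a_n}+\tfrac{1}{b_n-x}\bigr)\ge\min(x-a_n,b_n-x)\bigl(\tfrac{1}{x-a_n}+\tfrac{1}{b_n-x}\bigr)\ge1$, so $\sigma_n^{-}\ge\sigma_n^{+}+1$ for every $n$. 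As $\{a_n\}$ tends to $x$ from the left and $\{b_n\}$ from the right, $\overline{D}^{-}T_{\mathcal D}(x)\ge\limsup_n\sigma_n^{-}$ and $\underline{D}^{+}T_{\mathcal D}(x)\le\liminf_n\sigma_n^{+}$, whence $\overline{D}^{-}T_{\mathcal D}(x)\ge\underline{D}^{+}T_{\mathcal D}(x)+1>\underline{D}^{+}T_{\mathcal D}(x)$ (the case $\underline{D}^{+}T_{\mathcal D}(x)=-\infty$ giving $\partial T_{\mathcal D}(x)=\emptyset$ at once). Therefore $\partial T_{\mathcal D}(x)=\emptyset$.

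The two load-bearing observations are the shrinking-gap estimate forcing $R_N(x\pm t)/t\to\pm\infty$ in the first case, and the concavity of the partial sum $\sum_{k<n}g_k$ on the component $I_n$ in the second; with these in hand I expect no substantial obstacle, only routine bookkeeping --- the endpoints $0,1$ (where one-sided derivatives are used) and the case where $x$ is the midpoint of some $I_n$ (a harmless concave corner of $g_n$).
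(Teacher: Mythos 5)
Your argument is correct where it matters, and it is necessarily a different route from the paper's: the paper does not actually prove Theorem~\ref{JOCA}, it imports it from \cite{FGG}, where it is established in a separable Hilbert space under extra hypotheses on the decomposition, with the remark that these are superfluous on $\mathbb{R}$. What you supply is a self-contained one-dimensional proof, and its two load-bearing steps both hold up. For $x\in D$, splitting off the finite piecewise-affine head $L=\sum_{n<N}g_n$ and using that the gaps $\rho_n^{\pm}$ of the nested sets $D_n$ around $x$ shrink to $0$ (density of $D$) indeed gives $g_n(x\pm t)=t$ for all $n$ in a block of length $M(t)\to\infty$, hence $d_+T_{\mathcal D}(x)=+\infty$ and $D^-T_{\mathcal D}(x)=-\infty$. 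For interior $x\notin D$, your secant comparison at the endpoints of the component $I_n=(a_n,b_n)$ is the right mechanism: $g_k(a_n)=g_k(b_n)=0$ for $k\geq n$, each $g_k$ with $k<n$ is concave on $[a_n,b_n]$ (tent or affine), so the head contributes nonnegatively to $\sigma_n^--\sigma_n^+$, while $R_n(x)\geq\min(x-a_n,b_n-x)$ forces $\sigma_n^-\geq\sigma_n^++1$; since $a_n\uparrow x$, $b_n\downarrow x$, this yields $D^-T_{\mathcal D}(x)\geq d_+T_{\mathcal D}(x)+1$ (with the infinite cases handled as you say), so $\partial T_{\mathcal D}(x)=\emptyset$. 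Using concavity rather than linearity is the correct general substitute for the paper's Lemma~\ref{sec:char-infin-deriv}-type computation, since in the general setting $x$ may sit at a midpoint corner of some $g_k$; your proof also substantiates the paper's unproved remark that the restrictions of \cite{FGG} are not needed in dimension one.

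The one loose end is not the one you flag as harmless but the endpoints: if $x\in\{0,1\}\smallsetminus D$, your comparison has no second side, and $\partial T_{\mathcal D}(x)=\emptyset$ then amounts to a one-sided statement, e.g.\ $d_+T_{\mathcal D}(0)=-\infty$ when $0\notin D$. This is provable but needs a small argument, not mere bookkeeping: with $c_m=\mathrm{dist}(0,D_m)$, take $h$ slightly below $c_m$; then each $g_n$ with $n\leq m$ has difference quotient exactly $-1$ on $[0,h]$, while for $n>m$ one has $g_n(h)\leq\min\bigl(c_m-h,\alpha_n\bigr)+g_n(0)$-type bounds, and the halving condition $2\alpha_{n+1}\leq\alpha_n$ makes the total tail contribution $o(1)$ as $h\uparrow c_m$; hence the quotient is at most $-m+o(1)$ and the liminf is $-\infty$ (symmetrically at $1$). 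In the application of this paper the issue is vacuous, since $0,1\in D_1$ for the sets $D_n=\{k/r^{n-1}\}$, but if you intend the theorem for an arbitrary admissible decomposition you should add this endpoint argument explicitly.
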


Here $\partial T_{\mathcal{D}}(x)$ denotes the Fr\'echet subdifferential of $T_{\mathcal{D}}$ at $x$ 
 (see \cite{F}). In the one dimensional case, the subdifferential of a function $f:\R\to\R$ at a point $x$ is characterized in terms of the Dini derivatives as follows
$$
\partial f(x)=\left [D^-f(x), d_+f(x)\right]\cap \mathbb{R}.
$$
Let us remember that the Dini derivatives are defined by
\begin{align*}
D^- f(x)=\limsup_{t\uparrow 0}\frac{f(x+t)-f(x)}{t}\quad \text{and}\quad d_+f(x)=\liminf_{t\downarrow 0}\frac{f(x+t)-f(x)}{t}.
\end{align*}
The other two Dini derivatives $D^+f(x)$ and $d_-f(x)$ are defined analogously.

If we consider the set $D=\{kr^{-n}\in[0,1]:k, n\in\mathbb{Z}^+\}$ and its decomposition
$$D_n=\left \{\frac{k}{r^{n-1}}\in [0,1]:k\in\mathbb{Z}\right \},$$
we may rewrite the function $f_r$ as the corresponding Generalized Takagi-Van der Waerden function given by
$$
f_r(x)=\sum_{n=1}^{\infty}g_n(x).
$$
Moreover, we will denote by $\widetilde{D}_n$ the set of middle points of consecutive points of $D_n$ 
and by $\widetilde{D}$ their union, $\cup_n \widetilde{D}_n$.

Let us describe the body of this paper. In Section 2 we will examine the behavior of the functions $g_n$,
as well as the derivatives series $\sum_{n=1}^{\infty}g'_n(x)$
when $x\not\in D\cup \widetilde{D}$. 
Let us observe that $g'_n(x)$ does not exist for $n$ big enough when $x\in D\cup \widetilde{D}$. 
Additionally, we will prove that the sets
$$
\Big\{ x\in [0,1]:\sum_{n=1}^{\infty}g'_n(x)=+\infty \Big\}
\quad \text{and}\quad 
\Big\{ x\in [0,1]:\sum_{n=1}^{\infty}g'_n(x)=-\infty \Big\}
$$
have Lebesgue measure zero.

At the beginning of the Section 3 we will characterize the lateral derivatives when $x\in D$ 
and when $x\in\widetilde{D}$. 
Secondly, we will investigate the relation of the derivatives series with lateral derivatives 
and Dini derivatives when $x\not\in D\cup \widetilde{D}$. 
It is important to highlight the role of $r$ parity when considering this relation. 
We will illustrate it with an example. 
Subsequently, we dedicate the rest of this section to present the results that completely characterize the set of points where the Takagi-Van der Waerden functions possess a left-sided or right-sided infinite derivative.

Finally, in Section 4 we will prove that the sets 
$$
\{ x\in [0,1]: f'_r(x)=+\infty \}\quad\text{and}\quad \{ x\in [0,1]: f'_r(x)=-\infty \}
$$
have Hausdorff dimension one and Lebesgue measure zero. 

\section{Behavior of the functions $g_n$  and their derivatives series}

We will begin by introducing some notation and basic results. For a real number $x\in [0,1]$ we consider its basis $r$ expansion
$$
x=\sum_{n=1}^{\infty}\frac{\varepsilon _n}{r^n}, \qquad \varepsilon _n \in \{ 0,1,\dots ,r-1\}.
$$

It is immediate that $x\in D$ if and only if there exists $n_0$ such that either 
$\varepsilon _n=0$ for every $n\geq n_0$ or $\varepsilon _n=r-1$ for every $n\geq n_0$. In this case,  unless expressly stated otherwise, we will choose the representation ending in all zeros. On the
other hand, regarding $\widetilde{D}$, we have to distinguish two cases: if $r$ is even then 
$\widetilde{D}\subset D$, but for $r$ odd, 
$x\in \widetilde{D}$ if and only if $\varepsilon _n=\frac{r-1}{2}$ for every $n\geq n_0$ for some $n_0$. 

It is clear that when $x\in D\cup \widetilde{D}$, $g'_n(x)$ does not exists for $n$ big enough, meanwhile
if $x\not\in D\cup \widetilde{D}$, then
the derivatives $g'_n(x)\in \{ -1,1\}$, are determined as follows:

\begin{enumerate}
  \item If $\varepsilon _n<\frac{r-1}{2}$, then $g'_n(x)=1$.
  \item If $\varepsilon _n>\frac{r-1}{2}$, then $g'_n(x)=-1$.
  \item If $\varepsilon _n=\frac{r-1}{2}$, then $g'_n(x)=g'_{k_n}(x)$ where
  $k_n=\min \left \{ j>n: \varepsilon _j\neq \frac{r-1}{2}\right \}$.
\end{enumerate}

Let us observe that the third case can only occur when r is odd. If $n\geq 2$ we denote by
$$
\hat{x}_n=\sum_{k=1}^{n-1}\frac{\varepsilon _k}{r^k},
$$
the biggest element of $D_n$ smaller than $x$, $\hat{x}_1=0$. It is immediate that $ g_n(x)=\min \left \{ x-\hat{x}_n,\hat{x}_n+\frac{1}{r^{n-1}}-x\right \}.$
More precisely, if $x\not\in D\cup \widetilde{D}$ we have

\begin{enumerate}
  \item If $\varepsilon _n<\frac{r-1}{2}$, then $g_n(x)=x-\hat{x}_n$.
  \item If $\varepsilon _n>\frac{r-1}{2}$, then $g_n(x)=\hat{x}_n+\frac{1}{r^{n-1}}-x$. 
  \item  If $\varepsilon _n=\frac{r-1}{2}$, then $g_{n}(x)=x-\hat{x}_n$ if and only if
  $\varepsilon _{k_n}<\frac{r-1}{2}$.
\end{enumerate}

The functions $g_n$, and consequently $f_r$ are symmetric with respect to $x=1/2$, hence
if we consider the function $S:[0,1]\to [0,1]$ defined by  $S(x)=1-x$,
it is immediate that $g_n(x)=g_n(S(x))$ and $g'_n(x)=-g'_n(S(x))$ for every $x\notin D\cup \widetilde{D}$. The following fact will be useful later on.

\begin{fact}\label{simetria}
For every $x\notin D\cup \widetilde{D}$ we have that
$\sum_{n=1}^{\infty}g'_n(x)=+\infty$ if and only if 
$\sum_{n=1}^{\infty}g'_n(S(x))=-\infty$. 
\end{fact}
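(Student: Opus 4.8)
The plan is to use the reflection symmetry $S(x) = 1-x$ directly. The key observation, already recorded in the excerpt, is that for $x \notin D \cup \widetilde{D}$ we have $g_n'(x) = -g_n'(S(x))$ for every $n$. This is an immediate consequence of the symmetry $g_n(x) = g_n(S(x))$: differentiating both sides and applying the chain rule to $S(x) = 1-x$ gives $g_n'(x) = -g_n'(S(x))$, valid precisely at those points where the derivatives exist, i.e. off $D \cup \widetilde{D}$.

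First I would note that $x \notin D \cup \widetilde{D}$ if and only if $S(x) \notin D \cup \widetilde{D}$, since both $D$ and $\widetilde{D}$ are symmetric about $1/2$ (the dyadic-type rationals and the midpoints of consecutive points of $D_n$ are each invariant under $x \mapsto 1-x$). Hence the hypothesis on $x$ transfers to $S(x)$, and the series $\sum_{n=1}^\infty g_n'(S(x))$ is meaningful with every term in $\{-1,1\}$, just as for $x$.

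Then the proof is a one-line consequence: for $x \notin D \cup \widetilde{D}$,
\begin{equation*}
\sum_{n=1}^{N} g_n'(S(x)) = -\sum_{n=1}^{N} g_n'(x) \qquad \text{for every } N,
\end{equation*}
so the partial sums of one series are the negatives of the partial sums of the other. Taking limits, $\sum_{n=1}^\infty g_n'(x) = +\infty$ holds if and only if the partial sums $\sum_{n=1}^N g_n'(x)$ tend to $+\infty$, which happens if and only if the partial sums $\sum_{n=1}^N g_n'(S(x))$ tend to $-\infty$, i.e. if and only if $\sum_{n=1}^\infty g_n'(S(x)) = -\infty$.

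There is essentially no obstacle here; the only point requiring a word of care is making sure the termwise identity $g_n'(x) = -g_n'(S(x))$ is invoked only for $x \notin D \cup \widetilde{D}$ (where all derivatives involved genuinely exist), which is exactly the hypothesis of the Fact, and checking the symmetry of $D \cup \widetilde{D}$ so that the conclusion is not vacuous. Both of these were already established in the discussion preceding the statement, so the argument is immediate.
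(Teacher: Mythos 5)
Your proposal is correct and follows exactly the route the paper intends: the Fact is stated as an immediate consequence of the termwise identity $g'_n(x)=-g'_n(S(x))$ established just before it, so negating partial sums is all that is needed. Your additional check that $D\cup\widetilde{D}$ is symmetric under $S$ (so that the statement for $S(x)$ is meaningful) is a sensible, if implicit in the paper, bit of bookkeeping.
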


We denote
$$
O_n= O_n(x):=\# \left \{ k\leq n: \varepsilon _k<\frac{r-1}{2}\right\} 
$$
and
$$
I_n=I_n(x):=\# \left \{ k\leq n: \varepsilon _k>\frac{r-1}{2}\right \} .
$$
Let us observe that $O_n+I_n\leq n$ with equality whenever $r$ is even. In this case $\frac{r-1}{2}\notin \Z$ and consequently $\varepsilon _n\neq \frac{r-1}{2}$ always.
However, if $r$ is odd, we may have $\varepsilon _n=\frac{r-1}{2}$ for some $n$. For this reason, we introduce
the function $\varphi _r:[0,1]\smallsetminus (D\cup \widetilde{D})\to [0,1]$ defined by
$$
\varphi _r(x)=\sum_{n=1}^{\infty}\frac{\widetilde{\varepsilon} _n}{r^n}
$$
where, if we denote, as above, by $k_n$ the smallest index greater than $n$ such that $\varepsilon _{k_n}\neq \frac{r-1}{2}$,
then
$\widetilde{\varepsilon}_n=\varepsilon _{k_n}$ whenever $\varepsilon _n= \frac{r-1}{2}$ and 
$\widetilde{\varepsilon}_n=\varepsilon _n$ otherwise. The following result is immediate:

\begin{proposition}
Assume that $x\notin D\cup \widetilde{D}$.
If $r$ odd, then
$g'_n(x)=g'_n(\varphi _r(x))$ for every $n$. Consequently
\begin{enumerate}
  \item  $\sum_{n=1}^{\infty}g'_n(x)=+\infty$ if and only if  $\lim_n(O_n(\varphi _r(x))-I_n(\varphi _r(x)))=+\infty$.
  \item $\sum_{n=1}^{\infty}g'_n(x)=-\infty$ if and only if  $\lim_n(O_n(\varphi _r(x))-I_n(\varphi _r(x)))=-\infty$.
\end{enumerate}
\end{proposition}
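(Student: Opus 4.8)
The plan is to reduce the sum $\sum_n g'_n(x)$ to a count of indices via the quantities $O_n$ and $I_n$, and to handle the odd case by transferring the problem to $\varphi_r(x)$, where the ``plateau'' digits $\tfrac{r-1}{2}$ have been removed. First I would observe that for $x\notin D\cup\widetilde D$ the derivative $g'_n(x)$ exists and lies in $\{-1,1\}$, as recalled just above the statement. When $\varepsilon_n\neq\tfrac{r-1}{2}$, the sign of $g'_n(x)$ depends only on whether $\varepsilon_n<\tfrac{r-1}{2}$ or $>\tfrac{r-1}{2}$, i.e.\ only on the digit $\varepsilon_n$ itself; and when $\varepsilon_n=\tfrac{r-1}{2}$ (possible only for $r$ odd), rule~(3) says $g'_n(x)=g'_{k_n}(x)$, which by induction equals $\pm1$ according to whether $\varepsilon_{k_n}<\tfrac{r-1}{2}$ or $>\tfrac{r-1}{2}$. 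In every case $g'_n(x)$ is exactly $+1$ if the first non-$\tfrac{r-1}{2}$ digit at or after position $n$ is $<\tfrac{r-1}{2}$, and $-1$ otherwise. Since $\varphi_r(x)=\sum_n\widetilde\varepsilon_n r^{-n}$ has $n$-th digit $\widetilde\varepsilon_n=\varepsilon_{k_n}$ when $\varepsilon_n=\tfrac{r-1}{2}$ and $\widetilde\varepsilon_n=\varepsilon_n$ otherwise, one checks that $\widetilde\varepsilon_n$ and $\varepsilon_n$ lie on the same side of $\tfrac{r-1}{2}$ in the relevant sense, and moreover $\varphi_r(x)\notin D\cup\widetilde D$ as well, so rule~(3) is never triggered for $\varphi_r(x)$. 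Hence $g'_n(\varphi_r(x))=g'_n(x)$ for every $n$, which is the first assertion.

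For the consequence, with $y:=\varphi_r(x)$ we have, directly from the case analysis of the sign of $g'_n$, that
\begin{equation*}
g'_n(y)=\begin{cases}+1 & \text{if }\widetilde\varepsilon_n<\frac{r-1}{2},\\ -1 & \text{if }\widetilde\varepsilon_n>\frac{r-1}{2},\end{cases}
\end{equation*}
and since $y\notin D\cup\widetilde D$ every digit $\widetilde\varepsilon_n$ satisfies exactly one of these two alternatives. Therefore the partial sums telescope into a difference of counting functions:
\begin{equation*}
\sum_{k=1}^{n}g'_k(y)=\#\Big\{k\le n:\widetilde\varepsilon_k<\tfrac{r-1}{2}\Big\}-\#\Big\{k\le n:\widetilde\varepsilon_k>\tfrac{r-1}{2}\Big\}=O_n(y)-I_n(y).
\end{equation*}
Because each term $g'_k(y)=\pm1$, the sequence of partial sums changes by exactly $1$ at each step, so $\sum_n g'_n(y)=+\infty$ is equivalent to $O_n(y)-I_n(y)\to+\infty$, and likewise with $-\infty$; convergence or oscillation of $O_n(y)-I_n(y)$ corresponds to the series not diverging to $\pm\infty$. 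Combining this with $g'_n(x)=g'_n(y)$ gives statements (1) and (2).

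The only genuinely delicate point is verifying that $\varphi_r(x)\notin D\cup\widetilde D$ and that the passage from $\varepsilon_n$ to $\widetilde\varepsilon_n$ does not accidentally create a tail of all-$\tfrac{r-1}{2}$ digits or an all-$0$ / all-$(r-1)$ tail. This needs the hypothesis $x\notin D\cup\widetilde D$: for $r$ odd, $x\notin\widetilde D$ means $\varepsilon_n=\tfrac{r-1}{2}$ fails for infinitely many $n$, so each $k_n$ is well-defined and the digit string $\{\widetilde\varepsilon_n\}$ is obtained from $\{\varepsilon_n\}$ by replacing each maximal run of $\tfrac{r-1}{2}$'s with copies of the digit that follows it; one then checks that a forbidden tail in $\widetilde\varepsilon$ would force a forbidden tail in $\varepsilon$, contradicting $x\notin D\cup\widetilde D$. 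Once this bookkeeping is in place, everything else is immediate, which is presumably why the authors call the proposition immediate.
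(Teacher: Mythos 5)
Your core computation is correct and is surely what the authors have in mind when they call the proposition ``immediate'' (the paper gives no proof at all): for $x\notin D\cup\widetilde D$ the rules (1)--(3) show that $g'_k(x)=+1$ precisely when the first digit different from $\frac{r-1}{2}$ at or after position $k$ is $<\frac{r-1}{2}$, i.e.\ precisely when $\widetilde\varepsilon_k<\frac{r-1}{2}$, and $g'_k(x)=-1$ when $\widetilde\varepsilon_k>\frac{r-1}{2}$; since $\widetilde\varepsilon_k\neq\frac{r-1}{2}$ for every $k$, this gives $\sum_{k=1}^{n}g'_k(x)=O_n(\varphi_r(x))-I_n(\varphi_r(x))$ with the counts taken with respect to the digit string $\{\widetilde\varepsilon_k\}$, and, the terms being $\pm1$, divergence of the series to $\pm\infty$ is equivalent to divergence of these differences. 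That already yields items (1) and (2).

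The step you flag as the delicate point, however, is wrong as you argue it. It is true that $\varphi_r(x)\notin\widetilde D$ (no $\widetilde\varepsilon_k$ equals $\frac{r-1}{2}$), but $\varphi_r(x)$ can lie in $D$, and a forbidden tail in $\widetilde\varepsilon$ does \emph{not} force one in $\varepsilon$: replacing each maximal run of $\frac{r-1}{2}$'s by the digit that follows it can manufacture an all-zero (or all-$(r-1)$) tail out of a tail whose digits lie in $\{0,\frac{r-1}{2}\}$ (resp.\ $\{\frac{r-1}{2},r-1\}$) with infinitely many of each. Concretely, for $r=3$ the point $x$ with base-$3$ digits $0,1,0,1,0,1,\dots$ (i.e.\ $x=\frac18$) is not in $D\cup\widetilde D$, yet $\widetilde\varepsilon_k=0$ for all $k$, so $\varphi_3(x)=0\in D$; worse, $x$ with digits $0,1,2,1,2,1,2,\dots$ gives $\varphi_3(x)=\frac13$, an interior point of $D_2$, at which $g'_n(\varphi_3(x))$ does not even exist for $n\geq2$, so the identity $g'_n(x)=g'_n(\varphi_r(x))$ cannot be read literally there (this is really a sloppiness in the proposition as stated, which your argument inherits rather than repairs). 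The fix is simply not to route anything through the point $\varphi_r(x)$: as in your first paragraph, the sign of $g'_k(x)$ is determined by $\widetilde\varepsilon_k$ directly from the rules for $x$, and (1)--(2) follow, provided $O_n(\varphi_r(x))$ and $I_n(\varphi_r(x))$ are understood as counts over the defining digits $\widetilde\varepsilon_k$ (note that re-expanding $\varphi_r(x)\in D$ with the paper's all-zeros convention could change $O_n-I_n$, e.g.\ $\frac13=0.1000\ldots$ versus $0.0222\ldots$ in base $3$, so this reading is the only one under which the proposition is true).
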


\begin{thm}\label{nullset}
The sets 
$$
A^+=\Big\{ x:\sum_{n=1}^{\infty}g'_n(x)=+\infty \Big\} \quad \textrm{and}
\quad A^-=\Big\{ x:\sum_{n=1}^{\infty}g'_n(x)=-\infty \Big\}
$$
have Lebesgue measure zero.
\end{thm}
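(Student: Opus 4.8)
The plan is to reduce the statement to a classical fact about digit frequencies in base-$r$ expansions, which in turn follows from the strong law of large numbers. First I would dispose of the odd-$r$ case by the Proposition: the map $\varphi_r$ replaces each run of ``middle'' digits $\frac{r-1}{2}$ by the value of the next non-middle digit, and $\sum g_n'(x)=\pm\infty$ is equivalent to $O_n(\varphi_r(x))-I_n(\varphi_r(x))\to\pm\infty$. So in all cases it suffices to show that
$$
\Big\{x:\lim_n\big(O_n(x)-I_n(x)\big)=+\infty\Big\}
$$
has Lebesgue measure zero (and symmetrically for $-\infty$, using Fact~\ref{simetria}); for odd $r$ one then pulls back through $\varphi_r$, noting that $\varphi_r$ maps the exceptional set into a set whose preimage one must still control — but the cleanest route is to observe that $O_n(x)-I_n(x)\to+\infty$ already forces $x$ into a null set directly, without passing to $\varphi_r$, so I would actually argue about $O_n(x)-I_n(x)$ for the original $x$ and only invoke $\varphi_r$ to know this is the right quantity.

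The core step is the following. Equip $[0,1]$ with Lebesgue measure; then the digits $\varepsilon_1,\varepsilon_2,\dots$ of the base-$r$ expansion are i.i.d.\ uniform on $\{0,1,\dots,r-1\}$ for almost every $x$ (the finitely many dyadic-type ambiguities form a null set). Set $X_k=1$ if $\varepsilon_k<\frac{r-1}{2}$, $X_k=-1$ if $\varepsilon_k>\frac{r-1}{2}$, and $X_k=0$ if $\varepsilon_k=\frac{r-1}{2}$ (the last only for odd $r$). Then $O_n-I_n=\sum_{k=1}^n X_k$, the $X_k$ are i.i.d., and by symmetry of the uniform distribution $\mathbb{E}[X_k]=0$. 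By the strong law of large numbers, $\frac1n\sum_{k=1}^n X_k\to 0$ almost everywhere, so in particular $\sum_{k=1}^n X_k$ cannot tend to $+\infty$ (nor to $-\infty$) except on a null set. Hence $A^+$ and $A^-$ are Lebesgue-null.

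If one prefers to avoid invoking probability theory explicitly, the same conclusion follows from a direct measure estimate: for fixed $M$ and large $n$, the set of $x$ with $O_n(x)-I_n(x)\ge M$ has measure equal to $r^{-n}$ times the number of length-$n$ digit strings with $(\#\text{small})-(\#\text{large})\ge M$, and a Stirling/Hoeffding estimate shows $\sum_n$ of the measures of $\{O_n-I_n\ge M,\ O_{n+1}-I_{n+1}\ge M\}$... more simply, $\limsup$ arguments show the measure of $\{x:\liminf_n(O_n-I_n)\ge M\}$ is zero for every $M$, and $A^+\subset\bigcup_M\{x:\liminf_n(O_n-I_n)\ge M\}$ up to the null set where digits are ambiguous. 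Either way, the only genuine subtlety is the bookkeeping for odd $r$: one must check that the middle-digit positions, which contribute $0$ to the sum, do not disturb the argument — and they do not, precisely because $\mathbb{E}[X_k]=0$ regardless of whether the middle value is included. That is the one place to be careful; everything else is routine.
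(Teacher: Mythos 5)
There is a genuine gap at the central step. From the strong law of large numbers you get $\frac1n\sum_{k\le n}X_k\to 0$ almost everywhere, but this does \emph{not} imply that $\sum_{k\le n}X_k$ cannot tend to $+\infty$: partial sums growing like $\sqrt n$ (or $\log n$) have vanishing averages yet diverge to $+\infty$. The statement you actually need --- that a non-degenerate mean-zero i.i.d.\ walk almost surely does not drift to $+\infty$ --- is strictly stronger than the SLLN; it requires, e.g., the law of the iterated logarithm, the Chung--Fuchs recurrence theorem, or a zero-one law (Kolmogorov or Hewitt--Savage) combined with the symmetry of the step distribution: once you know $\mathcal{L}(A^+)\in\{0,1\}$, disjointness of $A^+$ and $A^-$ together with $\mathcal{L}(A^+)=\mathcal{L}(A^-)$ forces both to be $0$. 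That last pattern is exactly the paper's proof, carried out by hand and without probabilistic machinery: membership in $A^+$ depends only on the tail of the base-$r$ expansion, so $J^n_k\cap A^+=\frac{k}{r^n}+(J^n_0\cap A^+)$, whence $\mathcal{L}(I\cap A^+)=\mathcal{L}(I)\,\mathcal{L}(A^+)$ for every interval and $\mathcal{L}(A^+)\in\{0,1\}$; the symmetry $S(x)=1-x$ then rules out measure $1$. Your secondary ``direct measure estimate'' sketch does not repair this: $\mathcal{L}\{O_n-I_n\ge M\}$ tends to $\tfrac12$ by the central limit theorem, not to $0$, so no per-$n$ bound of that kind can give $\mathcal{L}\{\liminf_n(O_n-I_n)\ge M\}=0$; that assertion is again the recurrence statement you were trying to avoid proving.

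There is also a problem specific to odd $r$. Your ``cleanest route'' replaces $\sum_{k\le n}g'_k(x)$ by $O_n(x)-I_n(x)$ for the original $x$, but these are not comparable: the paper's Proposition only gives equivalence with $O_n(\varphi_r(x))-I_n(\varphi_r(x))$. For instance, if the digit string of $x$ consists of repeated blocks (one digit $>\frac{r-1}2$, then $b\ge 1$ middle digits $\frac{r-1}2$, then one digit $<\frac{r-1}2$), every middle digit inherits the sign $+1$ of the closing digit, so $\sum_{k\le n}g'_k(x)\to+\infty$ while $O_n(x)-I_n(x)$ stays bounded; thus $A^+$ is not contained in $\{x:O_n(x)-I_n(x)\to+\infty\}$. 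Nor can you apply the i.i.d.\ framework directly to the $g'_k(x)$: under Lebesgue measure they are identically distributed but not independent (for $r=3$, the event $\varepsilon_1=1$ forces $g'_1(x)=g'_2(x)$), and the digits of $\varphi_r(x)$ are likewise not i.i.d.\ under the pushforward. So for odd $r$ the probabilistic route needs genuine extra work --- e.g.\ an exchangeability or tail zero-one argument applied to the actual sequence $(g'_k(x))_k$ together with the symmetry $x\mapsto 1-x$ --- which is, in substance, what the paper's self-similarity argument accomplishes.
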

\begin{proof}
These sets are obviously disjoint, and they have the same measure since,
by Corollary \ref{simetria}, 
$A^+=S\bigl( A^- \bigr)$.

On the other hand, if we denote $J^n_0=(0,\frac{1}{r^n})$ and 
$J^n_k=\frac{k}{r^n}+J_0$, we have that 
$$
J^n_k\cap A^+=\frac{k}{r^n}+\left (J^n_0\cap A^+\right ).
$$
Hence $\mathcal{L} (J^n_k\cap A^+)=\mathcal{L} (J^n_0\cap A^+)$ for every $k=0,\dots ,r^n-1$, which
implies 
$$
\mathcal{L} (J^n_k\cap A^+)=\frac{1}{r^n}\mathcal{L} (A^+)=\mathcal{L} (J^n_k)\mathcal{L} (A^+).
$$
Now, let $I\subset [0,1]$ be an arbitrary interval.  We have that $I$ is union of a countable amount of disjoint
intervals $J^n_k$ with $n\in \N$ and $k\in \{ 0,\dots ,r^n-1\}$ plus a null (countable) set. This implies that
$\mathcal{L} (I\cap A^+)=\mathcal{L} (I)\mathcal{L} (A^+)$ for every interval $I\subset [0,1]$. 
It is well known that a set enjoying that property measures either $0$ or $1$ necessarily, but it cannot
measure $1$ since $A^+$ and $A^-$ have the same measure. Therefore
$$
\mathcal{L} (A^-)=\mathcal{L} (A^+)=0.
$$
\end{proof}

\section{Characterization of infinite derivatives}

This section includes the main results of the paper. First we
characterize the lateral derivatives of $f_r$ when $x\in D$ and when $x\in\widetilde{D}$. The first result that we present in this section is an immediate consequence of Theorem \ref{JOCA}.

\begin{proposition}\label{prop1}
If $x\in D$, then $f'^{+}_r(x)=+\infty$ and $f'^{-}_r(x)=-\infty$.
\end{proposition}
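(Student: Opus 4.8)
The plan is to deduce Proposition \ref{prop1} directly from Theorem \ref{JOCA} together with the subdifferential-Dini derivative dictionary recalled just before the statement. Since $f_r = T_{\mathcal{D}}$ for the specific decomposition $D_n = \{k/r^{n-1}\cap[0,1]\}$ with $D$ the $r$-adic rationals, and since the decomposition on $\R$ needs no extra hypotheses, Theorem \ref{JOCA} applies and gives $\partial f_r(x) = \R$ for every $x \in D$.

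First I would invoke the one-dimensional characterization $\partial f(x) = [D^-f(x), d_+f(x)] \cap \R$. If $x\in D$, then $\partial f_r(x)=\R$ forces this intersection to be all of $\R$, which is possible only if $D^-f_r(x) = -\infty$ and $d_+f_r(x) = +\infty$ (otherwise the set $[D^-f_r(x), d_+f_r(x)]\cap\R$ would be a proper subset of $\R$, or even empty). Next I would upgrade these Dini-derivative statements to genuine one-sided derivatives. For the right side: $d_+f_r(x) = \liminf_{t\downarrow 0}\frac{f_r(x+t)-f_r(x)}{t} = +\infty$ already means the $\liminf$ of the difference quotients is $+\infty$, hence the full limit from the right is $+\infty$, i.e. $f'^{+}_r(x) = +\infty$. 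Symmetrically, from $D^-f_r(x) = \limsup_{t\uparrow 0}\frac{f_r(x+t)-f_r(x)}{t} = -\infty$ we get that the $\limsup$ of the left difference quotients is $-\infty$, so the left-hand limit exists and equals $-\infty$, i.e. $f'^{-}_r(x) = -\infty$.

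There is no real obstacle here; the only point requiring a word of care is the logical step that $\partial f_r(x) = \R$ pins down both endpoint Dini derivatives to the respective infinities — one must note that $[a,b]\cap\R = \R$ holds precisely when $a = -\infty$ and $b = +\infty$ (with the convention that $[-\infty,+\infty]\cap\R = \R$), and that the remaining two Dini derivatives satisfy $d_-f_r(x)\le D^-f_r(x)$ and $D^+f_r(x)\ge d_+f_r(x)$, so they are forced to $-\infty$ and $+\infty$ as well, giving honest lateral derivatives rather than merely extreme ones. I would phrase the proof in two or three sentences accordingly.
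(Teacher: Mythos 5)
Your proposal is correct and follows exactly the route the paper intends: the paper states the proposition is an immediate consequence of Theorem \ref{JOCA} via the identification $\partial f_r(x)=[D^-f_r(x),d_+f_r(x)]\cap\R$, and you have simply spelled out the (correct) details that $\partial f_r(x)=\R$ forces $D^-f_r(x)=-\infty$ and $d_+f_r(x)=+\infty$, which, being a $\limsup$ on the left and a $\liminf$ on the right, already yield the genuine one-sided limits $f'^-_r(x)=-\infty$ and $f'^+_r(x)=+\infty$. No gap to report.
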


Let us observe that if $r$ is even, then $\widetilde{D}_n\subset D_{n+1}$  and  therefore $\widetilde{D}\subset D$. However, if
$r$ is odd then $\widetilde{D}\cap D=\emptyset$. Nevertheless we have the following result:

\begin{proposition}
If $r$ is odd and $x\in \widetilde{D}$, then $f'^{+}_r(x)=-\infty$ and $f'^{-}_r(x)=+\infty$.
\end{proposition}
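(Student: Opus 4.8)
The plan is to reduce the statement to a computation involving the point $y=\varphi_r(x)$ and the quantities $O_n(y)-I_n(y)$, exploiting the symmetry $S$ exactly as in the treatment of $D$. Suppose $r$ is odd and $x\in\widetilde D$, so that there is an $n_0$ with $\varepsilon_n=\tfrac{r-1}{2}$ for every $n\ge n_0$ (choosing this tail representation). First I would compute the right-sided derivative. For $t\downarrow 0$ small, $x+t$ has a basis-$r$ expansion agreeing with that of $x$ in the first $n_0-1$ digits and then carrying digits $\tfrac{r-1}{2}$ followed eventually by a digit $>\tfrac{r-1}{2}$; more precisely, for $x+t$ just to the right of $x$, the digits past position $n_0-1$ are $\tfrac{r-1}{2},\dots,\tfrac{r-1}{2}$ for a long run and then a digit that is $\ge\tfrac{r+1}{2}$. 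I would make this precise by looking at the dyadic-type intervals: $x$ is the left endpoint of the subinterval of $D_m$ corresponding to the digit pattern that is $\tfrac{r-1}{2}$ in all positions $n_0,\dots,m-1$, and for $h$ in that interval one can read off $g_n(x+h)$ explicitly from rules (1)--(3) following the statement of Theorem~\ref{JOCA}.

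The key computational step is then to show that $\dfrac{f_r(x+t)-f_r(x)}{t}\to-\infty$ as $t\downarrow 0$. I would split $f_r=\sum_{n<n_0}g_n+\sum_{n\ge n_0}g_n$. The finitely many terms $g_n$ with $n<n_0$ are affine near $x$ (since $x\notin D_n$ for those $n$, as $x\notin D$), so their contribution to the difference quotient is bounded. For the tail $\sum_{n\ge n_0}g_n$, note $g_n(x)=\operatorname{dist}(x,D_n)$, and since $x$ is a midpoint of consecutive points of $D_{n_0}$ lying in $\widetilde D_{n_0}$, one gets $g_{n_0}(x)=\tfrac12 r^{-(n_0-1)}$, while for $n>n_0$ the point $x$ is again a midpoint, so $g_n(x)=\tfrac12 r^{-(n-1)}$ as well. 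Moving to $x+t$ with $t$ small, for each such $n$ the value $g_n$ drops from the midpoint value $\tfrac12 r^{-(n-1)}$ toward $0$ at rate $-1$ for all $n$ in a range $n_0\le n\le m(t)$, where $m(t)\approx\log_r(1/t)$. Hence the tail difference quotient behaves like $\dfrac{-\,(m(t)-n_0)\,t + O(r^{-m(t)})}{t}\sim -(m(t)-n_0)\to-\infty$. This is exactly the mechanism by which the derivatives series $\sum g_n'$ diverging to $-\infty$ forces $f_r'^+(x)=-\infty$; indeed one can phrase it as: the $n_0$-truncation of $x+t$ lies outside $D\cup\widetilde D$ with $O_m-I_m\to-\infty$ along the relevant scales, so Proposition~\ref{prop1}-type reasoning applies on the subinterval.

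For the left-sided derivative I would simply invoke the symmetry: $S(x)=1-x\in\widetilde D$ as well (the midpoint set is symmetric about $1/2$), and $f_r\circ S=f_r$, so $f_r'^-(x)=-f_r'^+(S(x))=-(-\infty)=+\infty$ by the right-sided result applied to $S(x)$. This mirrors the proof of Proposition~\ref{prop1} and of Fact~\ref{simetria}.

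The main obstacle I expect is the bookkeeping in the tail estimate: carefully identifying, for a given small $t$, which $g_n$ are still on their "descending" branch versus which have bottomed out or changed slope, and showing the error terms (from the $g_n$ with $n$ near $m(t)$, and from the finitely many affine terms) are $o(m(t))$ relative to the linear growth of the count. Once the interval structure around $x$ is pinned down — that $x$ is the left endpoint of a nested sequence of "central" subintervals on which all relevant digits equal $\tfrac{r-1}{2}$ — the estimate should follow the same lines as the $x\in D$ case, with $-\infty$ replacing $+\infty$ because approaching a midpoint from the right decreases every $g_n$ rather than increasing it.
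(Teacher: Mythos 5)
Your core argument is correct, but it follows a genuinely different route from the paper. You prove the statement by a direct estimate: since $\varepsilon_n=\tfrac{r-1}{2}$ for all $n\ge n_0$, the point $x$ is the midpoint of its level-$n$ grid cell for every $n\ge n_0$, so $g_n(x)=\tfrac12 r^{-(n-1)}$ and each such $g_n$ decreases with slope $-1$ on $[x,x+t]$ as long as $t\le \tfrac12 r^{-(n-1)}$; summing over $n_0\le n\lesssim \log_r(1/t)$ and bounding the remaining terms gives a difference quotient of order $-\log_r(1/t)\to-\infty$, and the left derivative follows from the symmetry $S(x)=1-x$, $f_r\circ S=f_r$. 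The paper instead argues by reduction: it takes the decomposition $\{\widetilde D_n\}$ of $\widetilde D$ and the associated generalized function $T_{\mathcal D}=\sum_n \widetilde g_n$ with $\widetilde g_n(y)=\operatorname{dist}(y,\widetilde D_n)=\tfrac{1}{2r^{n-1}}-g_n(y)$, so that $T_{\mathcal D}=\tfrac{r}{2(r-1)}-f_r$; since $x\in\widetilde D$ is a ``$D$-point'' for this decomposition, Theorem \ref{JOCA} gives $\partial T_{\mathcal D}(x)=\R$, and translating back through the sign flip yields $f'^{+}_r(x)=-\infty$, $f'^{-}_r(x)=+\infty$ in one stroke. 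The paper's route buys brevity and re-uses machinery already in place; yours is more elementary and self-contained and even gives the quantitative rate of blow-up, at the price of the bookkeeping you anticipate. Two small repairs to your write-up: the opening framing via $\varphi_r(x)$ and $O_n-I_n$ does not apply here, since $\varphi_r$ is defined only on $[0,1]\smallsetminus(D\cup\widetilde D)$ and is never needed in your computation; and for the finitely many indices $n<n_0$ you should either take $n_0$ minimal (so that $x\notin D_n\cup\widetilde D_n$ and $g_n$ is indeed affine near $x$) or simply bound those difference quotients by $1$, which is all the estimate requires.
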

\begin{proof}
Let us consider the Generalized Takagi-Van der Waerden  function associated to the decomposition $\mathcal{D=}\{ \widetilde{D}_n\}$ of
$\widetilde{D}$ defined as
$$
T_{\mathcal{D}}(x)=\sum_{n=1}^{\infty}\widetilde{g}_k(x)
$$
where  $\widetilde{g}_k(x)=d(x,\widetilde{D}_k)$ denotes the distance of $x$ to the set $\widetilde{D}_k$. Let us observe that 
$$
\widetilde{g}_n(x)=\frac{1}{2}\frac{1}{r^{n-1}}-g_n(x)
$$
and consequently
$$
T_{\mathcal{D}}(x)=\frac{r}{2(r-1)}-f_r(x).
$$
From Theorem \ref{JOCA}, if we observe that $D^- T_{\mathcal{D}}(x)=-d_- f_r(x)$ and $d_+ T_{\mathcal{D}}(x)=-D^+f_r(x)$ we obtain the result.
\end{proof}

In the sequel we will consider the case $x\not\in D\cup \widetilde{D}$. 

\begin{lemma}
  \label{sec:char-infin-deriv}
  For all n if $x\in (a_n,b_n)\in\mathcal{F}_n$,  then
  \begin{equation*}\label{eq:1}
    \frac{f_r(b_n)-f_r(x)}{b_n-x}\leq \sum_{k=1}^ng'_k(x)\leq \frac{f_r(a_n)-f_r(x)}{a_n-x}.
  \end{equation*}

\end{lemma}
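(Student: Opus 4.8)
The plan is to exploit the fact that on the interval $(a_n,b_n)\in\mathcal{F}_n$ all the functions $g_1,\dots,g_n$ are affine, so the partial sum $\sum_{k=1}^n g_k$ is an affine function on $(a_n,b_n)$ with slope exactly $\sum_{k=1}^n g'_k(x)$, while the tail $\sum_{k=n+1}^\infty g_k$ contributes something one-signed at the endpoints. Concretely, first I would write $f_r = F_n + R_n$, where $F_n = \sum_{k=1}^n g_k$ and $R_n = \sum_{k=n+1}^\infty g_k$. Since each $D_k$ for $k\le n$ has its points among the partition points defining $\mathcal{F}_n$ (because $D_k\subset D_n$), the distance function $g_k$ is affine on $(a_n,b_n)$ for $k\le n$; hence $F_n$ is affine on $(a_n,b_n)$ with constant derivative $\sum_{k=1}^n g'_k(x)$, and therefore
\begin{equation*}
\frac{F_n(b_n)-F_n(x)}{b_n-x} = \sum_{k=1}^n g'_k(x) = \frac{F_n(a_n)-F_n(x)}{a_n-x}.
\end{equation*}

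Next I would control the tail. The key observation is that $R_n \ge 0$ everywhere, and $R_n(a_n) = R_n(b_n) = 0$ since $a_n,b_n\in D_n\subset D_k$ for... wait — more carefully, $a_n,b_n$ are endpoints of a component of $[0,1]\setminus D_n$, so $a_n,b_n\in D_n$, but we need $g_k(a_n)=0$ for $k>n$, i.e. $a_n\in D_k$; this holds because $\mathcal{D}$ is increasing, $D_n\subseteq D_k$ for $k>n$, hence $a_n,b_n\in D_k$ and $g_k(a_n)=g_k(b_n)=0$ for all $k\ge n$. Wait, that's not right either: $a_n, b_n \in D_n \subseteq D_k$ requires $n \le k$, which is exactly our case $k > n$. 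Good. Thus $R_n(a_n)=R_n(b_n)=0$ while $R_n(x)\ge 0$. Then
\begin{equation*}
f_r(b_n)-f_r(x) = \bigl(F_n(b_n)-F_n(x)\bigr) + \bigl(R_n(b_n)-R_n(x)\bigr) = (b_n-x)\sum_{k=1}^n g'_k(x) - R_n(x),
\end{equation*}
and since $b_n - x > 0$ and $R_n(x)\ge 0$, dividing by $b_n-x$ gives the left inequality. Symmetrically, $f_r(a_n)-f_r(x) = (a_n-x)\sum_{k=1}^n g'_k(x) - R_n(x)$ with $a_n - x < 0$, so dividing by the negative quantity $a_n-x$ flips the inequality and yields $\sum_{k=1}^n g'_k(x) \le \frac{f_r(a_n)-f_r(x)}{a_n-x}$.

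I do not anticipate a serious obstacle here; the statement is essentially bookkeeping. The one point requiring a little care is justifying that $g_k$ is affine on $(a_n,b_n)$ for $k\le n$ and hence $F_n'\equiv\sum_{k=1}^n g'_k(x)$ is constant there — this is where one uses that the breakpoints of $g_k$ lie in $D_k\subseteq D_n$, so no breakpoint of $g_k$ lies in the open interval $(a_n,b_n)$. A secondary point is confirming $R_n$ vanishes at both endpoints, which as noted follows from $a_n,b_n\in D_n\subseteq D_k$ for $k>n$; and that $R_n\ge 0$, which is clear since each $g_k\ge 0$. With those two facts in hand the two displayed identities for $f_r(b_n)-f_r(x)$ and $f_r(a_n)-f_r(x)$ close the argument immediately.
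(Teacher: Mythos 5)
There is a genuine gap in your argument: the central claim that each $g_k$, $k\le n$, is affine on $(a_n,b_n)$ -- and hence that $F_n=\sum_{k=1}^n g_k$ is affine there with slope $\sum_{k=1}^n g'_k(x)$ -- is false. The breakpoints of the distance function $g_k$ are not only the points of $D_k$ but also the midpoints $\widetilde{D}_k$ of consecutive points of $D_k$, where $g_k$ has a local maximum, and these need not lie in $D_n$. In particular, for $k=n$ the function $g_n$ is the tent $\min(y-a_n,\,b_n-y)$ on $(a_n,b_n)$, with a kink at the midpoint $\frac{a_n+b_n}{2}\in\widetilde{D}_n$, which lies strictly inside $(a_n,b_n)$ for every $r$; and when $r$ is odd the midpoints of $\widetilde{D}_k$ for $k<n$ may also fall inside $(a_n,b_n)$. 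Consequently your two displayed equalities fail: already for $n=1$ one has $(a_1,b_1)=(0,1)$, $F_1=g_1=\min(y,1-y)$, and for $x$ near $1/4$ (with $x\notin D\cup\widetilde{D}$) you get $g'_1(x)=1$ while $\frac{F_1(1)-F_1(x)}{1-x}<0$. Since the identity $f_r(b_n)-f_r(x)=(b_n-x)\sum_{k=1}^n g'_k(x)-R_n(x)$ is exactly what you divide by $b_n-x$, the proof as written does not go through.

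The statement is salvageable, and the repair is essentially the paper's proof: only one-sided estimates are needed, not equalities. For each $k\le n$, if $g'_k(x)=-1$ then $g_k$ decreases linearly from $x$ until the next point of $D_k$, which is $\geq b_n$ because $D_k\subseteq D_n$; hence $g_k$ is linear on $[x,b_n]$ and $\frac{g_k(b_n)-g_k(x)}{b_n-x}=g'_k(x)$. If $g'_k(x)=1$ a kink may intervene, but the Lipschitz constant $1$ of $g_k$ still gives $\frac{g_k(b_n)-g_k(x)}{b_n-x}\le 1=g'_k(x)$. Summing over $k\le n$ and discarding the nonpositive tail $-\sum_{k>n}\frac{g_k(x)}{b_n-x}$ (your treatment of the tail, using $g_k(b_n)=g_k(a_n)=0$ for $k>n$ and $g_k\ge 0$, is correct and coincides with the paper's) yields the left inequality; the symmetric argument toward $a_n$, where linearity holds when $g'_k(x)=1$ and the Lipschitz bound covers $g'_k(x)=-1$, and where the tail terms are nonnegative after dividing by the negative quantity $a_n-x$, yields the right one.
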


\begin{proof}
It is enough to observe that
\begin{multline*}
\frac{f_r(b_n)-f_r(x)}{b_n-x}=\sum_{k=1}^n\frac{g_k(b_n)-g_k(x)}{b_n-x}-\sum_{k=n+1}^{\infty}\frac{g_k(x)}{b_n-x}\\
\leq
\sum_{k=1}^n\frac{g_k(b_n)-g_k(x)}{b _n-x}\leq \sum_{k=1}^ng'_k(x)
\end{multline*}
since $g_k$ is linear on $[x,b_n]$ for $k\leq n$  provided that
$g'_k(x)=-1$. The other inequality is similar.
\end{proof}

Let $x\notin D\cup\widetilde{D}$. For every $n$, we denote
\begin{equation*}
    d_n=\min\{y-x: y\in D_n\cup\widetilde D_n, \, y>x\}.
\end{equation*}
\begin{remark}\label{sec:char-infin-deriv-2}
Let us observe that $0<d_n<\frac1{2r^{n-1}}$. Furthermore, we have that if $d_{n+1}<d_n$ then $d_n\geq \frac1{2r^n}$. 

We also observe that $d_n=d_{n+1}$, if and only if, one of the following situations occurs:
\begin{enumerate}
\item $\varepsilon_n=r-1$ and $\varepsilon_{k_{n+1}}>\frac{r-1}2$.
\item $\varepsilon_n=\frac{r-2}2$ and $\varepsilon_{n+1}\geq \frac r2$.
\item $\varepsilon_n=\frac{r-1}2$ and $\varepsilon_{k_{n+1}}<\frac{r-1}2$.
\end{enumerate}
where $k_n$ is defined as above. 

Moreover, $g_n$ is linear in $[x,x+d_n]$ for every $n$.
\end{remark}

There exist an strictly increasing sequence of  integers  $\{m_k\}_{k\geq 0}$, with $m_0=0$,
and a  strictly decreasing sequence  $\{x_k\}_{k\geq 1} \downarrow x$ such that:
\begin{equation*}
  \label{eq:8}
  x_k\in  D_n\cup\widetilde D_n, \quad \text{and}\quad x-x_k=d_n,\quad
  \text{for $m_{k-1}<n\leq m_k$.}
\end{equation*}
Let us observe that   $\frac1{2r^{m_k}}<d_{m_k}< \frac1{2r^{m_k-1}}$. 

\begin{lemma}
  \label{sec:char-infin-deriv-1}
  Let $x\not\in D\cup\widetilde D$.
    There exist  sequences $\{h_n\}\downarrow 0$
    and  $\{h'_n\}\downarrow 0$  such that:
    \begin{align*}
      \frac{f_r(x+h_n)-f_r(x)}{h_n}\geq & \sum_{k=1}^{n}g'_k(x)-\frac{r}{r-1}-1\\
 \frac{f_r(x-h'_n)-f_r(x)}{-h'_n}\leq                                        &\sum_{k=1}^{n}g'_k(x)+ \frac{r}{r-1}+1. 
      \end{align*}
\end{lemma}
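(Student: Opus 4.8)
The plan is to exhibit, for each $n$, a point $x+h_n$ lying in some $D_m\cup\widetilde D_m$ and at distance comparable to $r^{-m}$ from $x$, so that on $[x,x+h_n]$ the summands $g_1,\dots,g_m$ are affine --- and therefore contribute exactly $h_n\sum_{k=1}^{m}g'_k(x)$ --- while the tail $\sum_{j>m}\bigl(g_j(x+h_n)-g_j(x)\bigr)$ is negligible once divided by $h_n$. It suffices to produce the sequence $\{h_n\}$ giving the first inequality: the reflection $S(x)=1-x$ satisfies $g'_k(S(x))=-g'_k(x)$ and $f_r(S(u))=f_r(u)$, so applying the first inequality at the point $S(x)$ and substituting $u=S(x)+h_n$ turns it into the second one, with $h'_n:=h_n$.

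Write $\delta_k:=d_{m_k}=x_k-x$, so that $\delta_k$ is decreasing and $\tfrac1{2r^{m_k}}<\delta_k<\tfrac1{2r^{m_k-1}}$. The points $x_k$ are the natural candidates. Since $d$ is non-increasing we have $\delta_k\le d_j$ for all $j\le m_k$, so each $g_j$ with $j\le m_k$ is affine on $[x,x_k]$ with slope $g'_j(x)$, and $\sum_{j=1}^{m_k}\bigl(g_j(x_k)-g_j(x)\bigr)=\delta_k\sum_{j=1}^{m_k}g'_j(x)$. For the tail, $x_k\in D_{m_k}\cup\widetilde D_{m_k}$: either $x_k\in D_{m_k}$, whence $g_j(x_k)=0$ for all $j>m_k$, or $x_k\in\widetilde D_{m_k}$ (possible only for $r$ odd), in which case the base-$r$ expansion of $x_k$ has all its digits equal to $\tfrac{r-1}2$ from position $m_k$ on, so $g_j(x_k)=\tfrac1{2r^{j-1}}$ for every $j\ge m_k$. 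As $g_j(x)\le\tfrac1{2r^{j-1}}$ always, in both cases $\sum_{j>m_k}\bigl(g_j(x_k)-g_j(x)\bigr)\ge-\sum_{j>m_k}g_j(x)\ge-\tfrac1{2(r-1)r^{m_k-1}}$, and together with $\delta_k>\tfrac1{2r^{m_k}}$ this yields
\begin{equation*}
\frac{f_r(x_k)-f_r(x)}{\delta_k}=\sum_{j=1}^{m_k}g'_j(x)+\frac1{\delta_k}\sum_{j>m_k}\bigl(g_j(x_k)-g_j(x)\bigr)\ \ge\ \sum_{j=1}^{m_k}g'_j(x)-\frac r{r-1}.
\end{equation*}
So the choice $h_n:=\delta_k$ already settles the case $n=m_k$, with room to spare.

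For a general $n$ with $m_{k-1}<n\le m_k$ one needs the partial sums $S_n:=\sum_{j=1}^n g'_j(x)$ to remain, along the block, close to their endpoint values. This comes from Remark~\ref{sec:char-infin-deriv-2}: its list of the cases $d_n=d_{n+1}$ shows that inside a block the only possible consecutive pairs $\bigl(g'_n(x),g'_{n+1}(x)\bigr)$ are $(-1,-1)$, $(1,-1)$ and --- only when $r$ is odd --- $(1,1)$; the pair $(-1,1)$ never occurs. Hence along a block $g'_j(x)$ is first constantly $-1$ and then, if $r$ is odd, constantly $+1$, so $S_n\le\max(S_{m_{k-1}},S_{m_k})$; while for $r$ even the pattern on a block can only be $(+1,-1,\dots,-1)$, so $S_n\le S_{m_{k-1}}+1$. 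Accordingly, for $k\ge2$ I take $h_n:=\delta_{k-1}$ when $r$ is even, and $h_n:=\delta_{k'}$ with $k'\in\{k-1,k\}$ chosen (the same for all $n$ in the block) so that $S_{m_{k'}}$ is the larger of $S_{m_{k-1}},S_{m_k}$ when $r$ is odd. Applying the displayed estimate at level $m_{k'}$ and using the block bound gives
\begin{equation*}
\frac{f_r(x+h_n)-f_r(x)}{h_n}\ \ge\ S_{m_{k'}}-\frac r{r-1}\ \ge\ S_n-\Bigl(\frac r{r-1}+1\Bigr),
\end{equation*}
the extra $+1$ being needed only in the even case. Since $\delta_{k-1}>\delta_k$ and the level $m_{k'}$ used does not decrease with $n$, the sequence $\{h_n\}$ is non-increasing, and $h_n\to0$ because $\delta_k<\tfrac1{2r^{m_k-1}}$. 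The only case not covered this way is the first block (where $m_0=0$ is not an admissible level): there $S_n\le1$, and it suffices to find, for each such $n$, a $y>x$ with $\tfrac{f_r(y)-f_r(x)}{y-x}\ge-\tfrac r{r-1}$; this is immediate taking $y=x+\tfrac12$ when $x<\tfrac12$, and for $x$ near $1$ it follows from $f_r\ge0$, $\|f_r\|_\infty=\tfrac r{2(r-1)}$, and the self-similar scaling of $f_r$ near the endpoint, after which the construction resumes with block~$2$.

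The substantive part is the third paragraph: deducing from Remark~\ref{sec:char-infin-deriv-2} the rigidity of a block's digit pattern, hence the near-unimodality of the partial sums, and then arranging the choices of the $h_n$ to be consistent (non-increasing, tending to $0$) across block boundaries. It is precisely here that the parity of $r$ matters and that the constant $\tfrac r{r-1}+1$ arises.
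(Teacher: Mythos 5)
Your construction is the paper's own: take $h_n$ among the block distances $d_{m_k}$, use that $g_1,\dots,g_{m_k}$ are affine on $[x,x+d_{m_k}]$, bound the tail geometrically (your displayed estimate is the paper's first inequality specialized to $j=m_k$), get the second inequality by symmetry, and then control the partial sums $S_n=\sum_{k=1}^n g'_k(x)$ inside a block. The gap sits exactly at the step you call substantive. From the list of admissible consecutive pairs you state for $r$ odd --- which includes $(+1,-1)$ --- a block would have the form of a run of $+1$'s followed by a run of $-1$'s, not ``first constantly $-1$ and then constantly $+1$'' as you write; and for a block $(+1)^a(-1)^b$ the interior partial sums exceed $\max(S_{m_{k-1}},S_{m_k})$ by $\min(a,b)$, which is unbounded, so your key inequality $S_n\le\max(S_{m_{k-1}},S_{m_k})$ does not follow from what you established and the uniform constant $\frac{r}{r-1}+1$ would be lost. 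What saves the odd case is a stronger fact you did not prove: for $r$ odd the pair $(+1,-1)$ is also impossible inside a block, i.e.\ a block has constant sign. Indeed, the common nearest right point $x_k$ either lies in $D$, and then (since for odd $r$ no midpoint ever belongs to any $D_j$, as $(2l+1)r^{j-1}=2kr^{m-1}$ is impossible) it is the right endpoint of the level-$m$ component containing $x$ for every $m$ of the block, forcing $g'_m(x)=-1$ throughout; or it lies in $\widetilde D$, and then it is the midpoint at every level of the block, forcing $g'_m(x)=+1$ throughout. This dichotomy, $x_{k+1}\in D_{m_k+1}$ versus $x_{k+1}\in\widetilde D_{m_{k+1}}$, is precisely how the paper argues (choosing $h_n=d_{m_k}$ in the first case and $h_n=d_{m_{k+1}}$ in the second); with it, your per-block choice of $h_n$ goes through.

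The first-block patch is also wrong as stated. The claim that $S_n\le1$ on the first block is false: for $r=3$ and $x$ with base-$3$ digits $1,1,1,1,0,\dots$ the first block contains the levels $1,\dots,5$, all with $g'_j(x)=+1$, so $S_5=5$; that case is harmless (take $h_n=\delta_1$ and use $S_{m_1}\ge S_n$), but it is not covered by your ``$y=x+\frac12$'' remark. The genuinely delicate case is a long all-$(-1)$ first block, i.e.\ $x$ extremely close to $1$, where no scale plays the role of $\delta_0$; the appeal to ``self-similar scaling near the endpoint'' is not an argument, and in fact no choice of $h_n$ can work there: for instance, for $r=5$ and $x$ whose digits are $4$ repeated $N$ times followed by $3,4,3,4,\dots$, every right difference quotient of $f_5$ at $x$ is at most $-N$ plus an absolute constant (the first $\approx N$ summands are exactly linear of slope $-1$ up to $x+h$ for every admissible $h\le 1-x$, and the tail is a rescaled increment of $f_5$ from a point at which the right quotients are bounded above), while the statement demands a quotient $\ge -2-\frac{r}{r-1}$ for $n=1$. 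So the inequality can only be guaranteed for $n$ beyond the first block --- which is all that is needed for its use in Proposition \ref{necesaria1} --- and the paper's own proof has the same soft spot (its choice $h_n=d_{m_0}$ is undefined there); your write-up should make this restriction explicit rather than paper over it.
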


\begin{proof}
We observe first that
   if $ j\geq  m_k$ and 
    $\frac1{2r^j}<h\leq  d_{m_k}$ then
  \begin{multline}\label{eq:4}
      \frac{f_r(x+h)-f_r(x)}{h}= \sum_{m=1}^{m_{k}}
      \frac{g_m(x+h)-g_m(x)}{h}+\sum_{m= m_{k}+1}^{\infty}
      \frac{g_m(x+h)-g_m(x)}{h} \\  \geq \sum_{m=1}^{m_{k}}
      g'_m(x)-\frac1{2h}\sum_{m= m_{k}+1}^{\infty}\frac1{r^{m-1}}\geq \sum_{m=1}^{m_{k}}
      g'_m(x) - r^{j-m_k}\frac{r}{r-1}.
  \end{multline}

  Let $n\geq 1$. If $n=m_k$ for some $k$ the
   result is immediate by \eqref{eq:4} if we take $h_n=d_{m_k}$.
If $m_k< n< m_{k+1}$ and $x_{k+1}\in D_{m_k+1}$, let
$h_n=d_{m_k}$. In this case $g'_m(x)=-1$ for
$m_k+2\leq m \leq m_{k+1}$ and therefore, by \eqref{eq:4},
\begin{equation*}
  \frac{f_r(x+h_n)-f_r(x)}{h_n}\geq \sum_{m=1}^{m_{k}}
    g'_m(x)-\frac{r}{r-1}\geq\sum_{m=1}^{n}
    g'_m(x)-1-\frac{r}{r-1}.
  \end{equation*} 
If  $m_k<n< m_{k+1}$ and
$x_{k+1}\in \widetilde D_{m_{k+1}}$, then we take $h_n=d_{m_{k+1}}$. Again by \eqref{eq:4}, we have that
\begin{equation*}
   \frac{f_r(x+h_n)-f_r(x)}{h_n} \geq  \sum_{m=1}^{m_{k+1}} g'_m(x) 
   -  \frac{r}{r-1}\geq \sum_{m=1}^{n} g'_m(x) 
   -   \frac{r}{r-1}
\end{equation*}
since $g'_m(x)=1$ for all $m_k<m\leq
m_{k+1}$. 

The second inequality is obtained by applying the previous one at  $1-x$ and using the symmetry of $f_r$. 

\end{proof}

The following proposition is an immediate consequence of
lemmas \ref{sec:char-infin-deriv} and \ref{sec:char-infin-deriv-1}.
\begin{proposition}\label{necesaria1}
  Let $x\not\in D\cup\widetilde D$.
  \begin{enumerate}
  \item  \label{item:4} If either $f'^+_r(x)=+\infty$ or $f'^-_r(x)=+\infty$, then $\sum_{k=1}^{\infty}g'_k(x)=+\infty$.
\item \label{necesaria2} If either $f'^+_r(x)=-\infty$ or $f'^-_r(x)=-\infty$, then 
$
\sum_{k=1}^{\infty}g'_k(x)=-\infty.
$
\item \label{reciprocoparcial} If $\sum_{k=1}^{\infty}g'_k(x)=+\infty$
  then $D^+f_r(x)=D^-f_r(x)=+\infty$.
\item \label{item:3}
  If $\sum_{k=1}^{\infty}g'_k(x)=-\infty$
then $d_+f_r(x)=d_-f_r(x)=-\infty$. 
 \end{enumerate}
\end{proposition}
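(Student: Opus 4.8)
The plan is to read off all four implications directly from Lemmas \ref{sec:char-infin-deriv} and \ref{sec:char-infin-deriv-1}. Write $s_n:=\sum_{k=1}^{n}g'_k(x)$ and $C:=\frac{r}{r-1}+1$. For $x\notin D\cup\widetilde D$ these two lemmas supply, for every $n$, the four inequalities
\begin{align*}
&\text{(A)}\quad \frac{f_r(b_n)-f_r(x)}{b_n-x}\le s_n, &&\text{(B)}\quad s_n\le \frac{f_r(a_n)-f_r(x)}{a_n-x},\\
&\text{(C)}\quad \frac{f_r(x+h_n)-f_r(x)}{h_n}\ge s_n-C, &&\text{(D)}\quad \frac{f_r(x-h'_n)-f_r(x)}{-h'_n}\le s_n+C,
\end{align*}
where $(a_n,b_n)\in\mathcal{F}_n$ is the component containing $x$ and $\{h_n\},\{h'_n\}$ are the sequences of Lemma \ref{sec:char-infin-deriv-1}. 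Two preliminary remarks: since $0<b_n-a_n\le\alpha_n\to 0$ and $x\notin D$ is interior to $(a_n,b_n)$, we have $a_n\uparrow x$ and $b_n\downarrow x$, so that $x-a_n$ and $b_n-x$ are positive increments tending to $0$; likewise $h_n\downarrow 0$ and $h'_n\downarrow 0$. Thus each of (A)--(D) sandwiches a one-sided difference quotient of $f_r$, evaluated along increments shrinking to $0$, against the partial sums $s_n$ up to the constant $C$, and so divergence of one side forces divergence of the other.

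For part (1): if $f'^{+}_r(x)=+\infty$, then the right difference quotient along $b_n\downarrow x$ tends to $+\infty$, so (A) forces $s_n\to+\infty$; if instead $f'^{-}_r(x)=+\infty$, then the left difference quotient along $h'_n\downarrow 0$ tends to $+\infty$, so (D) forces $s_n\to+\infty$; in either case $\sum_{k=1}^{\infty}g'_k(x)=+\infty$. Part (2) is identical with the signs reversed: (C) handles $f'^{+}_r(x)=-\infty$ and (B) handles $f'^{-}_r(x)=-\infty$, both forcing $s_n\to-\infty$. For part (3): assuming $s_n\to+\infty$, inequality (C) gives $\frac{f_r(x+h_n)-f_r(x)}{h_n}\ge s_n-C\to+\infty$ with $h_n\downarrow 0$, whence $D^+f_r(x)=+\infty$; and (B) gives $\frac{f_r(a_n)-f_r(x)}{a_n-x}\ge s_n\to+\infty$, which, read as the left difference quotient of $f_r$ at the increment $x-a_n\downarrow 0$, gives $D^-f_r(x)=+\infty$. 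Part (4) is the mirror statement: if $s_n\to-\infty$, then (A) at the increment $b_n-x\downarrow 0$ yields $d_+f_r(x)=-\infty$ and (D) with $h'_n\downarrow 0$ yields $d_-f_r(x)=-\infty$. (Alternatively, (2) and (4) may be obtained from (1) and (3) applied at $S(x)=1-x$, using $f_r\circ S=f_r$, $g'_k(S(x))=-g'_k(x)$ and Fact \ref{simetria}, together with the fact that the reflection $S$ interchanges the one-sided and Dini derivatives of $f_r$ at $x$ and at $S(x)$ with a change of sign.)

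I do not anticipate a genuine obstacle: as the paper states, this is an immediate corollary of the two lemmas. The only points deserving a line of verification are that each divergent difference quotient is genuinely evaluated along increments tending to $0$, so that it really controls the corresponding lateral or Dini derivative; and that, because the bounds of Lemmas \ref{sec:char-infin-deriv} and \ref{sec:char-infin-deriv-1} hold for every $n$ with the bounding quantities themselves diverging, it is the whole sequence $s_n$, not just a subsequence, that tends to $\pm\infty$, which is precisely the meaning of $\sum_{k=1}^{\infty}g'_k(x)=\pm\infty$.
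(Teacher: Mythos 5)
Your proof is correct and follows exactly the route the paper intends: the paper states Proposition \ref{necesaria1} as an immediate consequence of Lemmas \ref{sec:char-infin-deriv} and \ref{sec:char-infin-deriv-1}, and your four inequalities (A)--(D), applied along increments $b_n-x$, $x-a_n$, $h_n$, $h'_n$ tending to $0$, are precisely that deduction spelled out. No gaps; the attention you give to full-sequence (not subsequence) divergence of $s_n$ and to the increments genuinely shrinking to $0$ covers the only points needing verification.
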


From Theorem \ref{nullset},  Proposition \ref{necesaria1}~(\ref{item:4}) and Proposition \ref{necesaria1}~(\ref{necesaria2}), we deduce the next result.
%%%%%%%%%%%%%%%%%%%%%%%%%%%%%%

\begin{corollary}\label{corolariolaterales}
The sets $\{ x: f'^+_r(x)=+\infty \}$, $\{ x: f'^-_r(x)=+\infty \}$, $\{ x: f'^+_r(x)=-\infty \}$ 
and $\{ x: f'^-_r(x)=-\infty \}$ are nulls for the Lebesgue measure. 
\end{corollary}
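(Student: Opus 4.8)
The plan is that each of the four sets coincides, outside a countable set, with a subset of $A^{+}$ or of $A^{-}$, so that Theorem~\ref{nullset} does all the work.

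First I would observe that $D\cup\widetilde D$ is countable: $D$ is countable by assumption and $\widetilde D=\bigcup_n\widetilde D_n$ is a countable union of finite sets. Hence $\mathcal L(D\cup\widetilde D)=0$, and it suffices to control the intersection of each of the four sets with $[0,1]\setminus(D\cup\widetilde D)$.

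Next, on $[0,1]\setminus(D\cup\widetilde D)$ I would apply Proposition~\ref{necesaria1}. Part~(\ref{item:4}) gives
\[
\bigl\{x\notin D\cup\widetilde D:f'^{+}_r(x)=+\infty\bigr\}\cup\bigl\{x\notin D\cup\widetilde D:f'^{-}_r(x)=+\infty\bigr\}\subset A^{+},
\]
and part~(\ref{necesaria2}) gives the analogous inclusion
\[
\bigl\{x\notin D\cup\widetilde D:f'^{+}_r(x)=-\infty\bigr\}\cup\bigl\{x\notin D\cup\widetilde D:f'^{-}_r(x)=-\infty\bigr\}\subset A^{-}.
\]
Since $\mathcal L(A^{+})=\mathcal L(A^{-})=0$ by Theorem~\ref{nullset}, each of the four sets is contained in the union of the null set $D\cup\widetilde D$ with $A^{+}$ or $A^{-}$, hence is Lebesgue null.

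I do not expect any genuine obstacle: this is essentially a bookkeeping corollary, and all the substance sits in Theorem~\ref{nullset} and Proposition~\ref{necesaria1}. The one point deserving a line of care is that the exceptional countable set cannot simply be ignored — by Proposition~\ref{prop1} every $x\in D$ satisfies $f'^{+}_r(x)=+\infty$ and $f'^{-}_r(x)=-\infty$, and similarly points of $\widetilde D$ contribute when $r$ is odd, so such points really do belong to two of the four sets; but being countable they are null, so the conclusion is unaffected.
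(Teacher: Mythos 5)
Your proof is correct and follows essentially the same route as the paper, which deduces the corollary in one line from Theorem~\ref{nullset} together with Proposition~\ref{necesaria1}, parts~(\ref{item:4}) and~(\ref{necesaria2}); your extra remark that the countable set $D\cup\widetilde D$ must be set aside (and genuinely meets two of the four sets, by Proposition~\ref{prop1}) is a sound piece of bookkeeping that the paper leaves implicit.
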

 
As we will see in the example below, we cannot improve Proposition \ref{necesaria1}~(\ref{reciprocoparcial}) getting $f'^+_r(x)=+\infty$.
However, if $r$ is even we have the following result that extends
Theorem 3.1 in \cite{AK1}. 

\begin{proposition}
  \label{derechapar}
 If  $r$ is even, then 
    \begin{enumerate}
  \item \label{item:6}  $f'^+_r(x)=+\infty$ provided that
$ \sum_{k=1}^{\infty}g'_k(x)=+\infty$.
\item \label{izquierdarpar}
 $f'^-_r(x)=-\infty$ provided that $\sum_{k=1}^{\infty}g'_k(x)=-\infty$.
\end{enumerate}
\end{proposition}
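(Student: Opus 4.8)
The plan is to prove the first statement; the second then follows from it by the symmetry of $f_r$, since $f_r(x)=f_r(1-x)$ gives $f'^{-}_r(x)=-f'^{+}_r(1-x)$, $1-x\notin D\cup\widetilde D$, and by Fact \ref{simetria} the hypothesis $\sum g'_k(x)=-\infty$ is equivalent to $\sum g'_k(1-x)=+\infty$. So assume $r$ even, $x\notin D\cup\widetilde D$, and write $S_n:=\sum_{j=1}^n g'_j(x)$, so that $S_n\to+\infty$. For a small $h>0$ let $k=k(h)$ be the unique index with $d_{m_{k+1}}<h\le d_{m_k}$; this $k$ exists once $h\le d_{m_1}$, and $k(h)\to\infty$ as $h\downarrow 0$ because $d_{m_k}\downarrow 0$. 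The whole proof reduces to the lower bound $\dfrac{f_r(x+h)-f_r(x)}{h}\ \ge\ \min\{S_{m_k},S_{m_{k+1}}\}-1-\dfrac{r}{r-1}$, whose right-hand side tends to $+\infty$ with $k$ since $S_n\to+\infty$; this yields $f'^{+}_r(x)=+\infty$.

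To obtain this estimate I would split $f_r(x+h)-f_r(x)=\sum_n\bigl(g_n(x+h)-g_n(x)\bigr)$ into the three blocks $n\le m_k$, $m_k<n\le m_{k+1}$, and $n>m_{k+1}$. For $n\le m_k$ one has $h\le d_{m_k}\le d_n$, so $g_n$ is linear on $[x,x+h]$ (Remark \ref{sec:char-infin-deriv-2}) and this block contributes exactly $S_{m_k}$ to the quotient. For $n>m_{k+1}$, bounding $g_n(x+h)-g_n(x)\ge -g_n(x)\ge -\tfrac1{2r^{n-1}}$, summing, and using $h>d_{m_{k+1}}>\tfrac1{2r^{m_{k+1}}}$, one gets a contribution $\ge -\tfrac r{r-1}$ exactly as in \eqref{eq:4}.

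The decisive block is $m_k<n\le m_{k+1}$, and this is where I expect the main difficulty and where the evenness of $r$ is used. For these $n$ one has $d_n=d_{m_{k+1}}=x_{k+1}-x$, so $g_n$ is linear on $[x,x_{k+1}]$ with slope $g'_n(x)$; writing $g_n(x+h)-g_n(x)=g'_n(x)\,d_{m_{k+1}}+\bigl(g_n(x+h)-g_n(x_{k+1})\bigr)$, the linear parts add up to $d_{m_{k+1}}(S_{m_{k+1}}-S_{m_k})$, and since $0<d_{m_{k+1}}/h\le 1$, dividing by $h$ gives $\ge\min\{S_{m_k},S_{m_{k+1}}\}-S_{m_k}$. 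It remains to control $\frac1h\sum_{m_k<n\le m_{k+1}}\bigl(g_n(x+h)-g_n(x_{k+1})\bigr)$. The key point is that, because $r$ is even, $x_{k+1}$ — a point of the grid $D_{m_k+1}\cup\widetilde D_{m_k+1}$ — actually belongs to $D_n$ for every $n\ge m_k+2$, so $g_n(x_{k+1})=0$ and $g_n(x+h)-g_n(x_{k+1})=g_n(x+h)\ge 0$ there; only the single index $n=m_k+1$ survives, with $g_{m_k+1}(x_{k+1})\in\{0,\tfrac1{2r^{m_k}}\}$, and a short computation using the V/tent shape of $g_{m_k+1}$ at $x_{k+1}$ (split according to whether $x+h$ stays within the tent of slope $\pm1$ adjacent to $x_{k+1}$, in which case $g_{m_k+1}(x+h)-g_{m_k+1}(x_{k+1})=-(h-d_{m_{k+1}})\ge -h$, or leaves it, in which case $h>\tfrac1{2r^{m_k}}$) gives $\frac1h\bigl(g_{m_k+1}(x+h)-g_{m_k+1}(x_{k+1})\bigr)\ge -1$. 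Adding the three block estimates produces the claimed inequality.

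I would stress that the hard part is exactly this middle block: for odd $r$ the point $x_{k+1}$ is generically a common local maximum of all the $g_n$ with $n>m_k$ (not a zero), so $\sum_{m_k<n\le m_{k+1}}g_n(x_{k+1})$ is of order $r^{-m_k}$ rather than $\le r^{-m_k}$ concentrated in one term, and the division by $h\approx d_{m_{k+1}}\approx r^{-m_{k+1}}$ destroys the bound — which is precisely why the conclusion fails for odd $r$, as the example after Proposition \ref{necesaria1} shows. The remaining steps (the two outer blocks and the symmetry argument for the left derivative) are routine once the parity observation is in place.
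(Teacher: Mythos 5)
Your proof is correct and takes essentially the same route as the paper: the same splitting of the difference quotient at $m_k$ and $m_{k+1}$ for $d_{m_{k+1}}<h\leq d_{m_k}$, the same tail estimate via $h>d_{m_{k+1}}>\frac{1}{2r^{m_{k+1}}}$, and the same decisive parity fact that for even $r$ one has $\widetilde D_{m_k+1}\subset D_{m_k+2}$, so $x_{k+1}$ is a zero of $g_n$ for all $n\geq m_k+2$ and only the single index $n=m_k+1$ costs a bounded amount. Your explicit handling of the middle block in fact spells out the step the paper leaves terse in passing from \eqref{eq:6} to \eqref{eq:12}, and your symmetry reduction of the second statement is exactly the paper's.
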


\begin{proof}
 It suffices to prove the first statement since $f'^-_r(x)=-\infty$ if and only if
$f'^+_r(1-x)=+\infty$. If $d_{m_{k+1}}< h\leq d_{m_k}$ then
\begin{multline}\label{eq:6}
\frac{f_r(x+h)-f_r(x)}{h} \\ \geq  \sum_{m=1}^{m_k} g'_m(x) +
\sum_{m=m_k +1}^{m_{k+1}} \frac{g_m(x+h)-g_m(x)}{h}-
    \frac1{2h}  \sum_{m=m_{k+1}+1}^{\infty}\frac1{r^{m-1}}
    \\ =  \sum_{m=1}^{m_k} g'_m(x) +
    \sum_{m=m_k +1}^{m_{k+1}}
    \frac{g_m(x+h)-g_m(x)}{h}-\frac1{2hr^{m_{k+1}}}  \frac{r}{r-1}\\ \geq  \sum_{m=1}^{m_k} g'_m(x) +
    \sum_{m=m_k+1}^{m_{k+1}}
    \frac{g_m(x+h)-g_m(x)}{h}-
    \frac{r}{r-1}.
\end{multline}
since $\frac{1}{2r^{m_{k+1}}}<d_{m_{k+1}}$. If  $m_{k+1}>m_k+1$ then 
\begin{equation}\label{eq:12}
  \frac{f_r(x+h)-f_r(x)}{h}\geq \sum_{m=1}^{m_{k+1}}
    g'_m(x)-2-\frac{r}{r-1}.
  \end{equation}
  If $m_{k+1}=m_k+1$ \eqref{eq:12} is immediate from \eqref{eq:6}. 
\end{proof}

Observe that  it is not possible to have a full converse of Proposition \ref{necesaria1}  since, for $r=2$, in \cite{KR} it is
provided an
example of a point such that the series of the derivatives converges to $+\infty$ but the function
$f_2$ has not $+\infty$ derivative at that point.

Next example for $f_3$ was the first clue that we had of the importance of $r$ parity  while dealing with these properties. We omit the proof because the result will be an immediate consequence of a subsequent theorem.

\begin{example}\label{example}
Let 
$$
x=\sum_{n=1}^{\infty}\frac{\varepsilon _n}{3^n}, 
$$
where $\varepsilon _n=0$ if $n=10^k$ for some $k$,
and $\varepsilon _n=1$ otherwise. We have that $g'_n(x)=1$ for every $n$, but
$f'_3(x)\neq +\infty$. 
\end{example} 

Furthermore, in the previous example it is not difficult to see directly 
that $d_+f_3(x)=-\infty$ although $\sum_kg'_k(x)=+\infty$. Consequently, Proposition \ref{derechapar}, 
and Theorem 3.1 in \cite{AK1} do not hold for $f_3$ since
$\lim_n(O_n-I_n)=+\infty$ and $f'^+_3(x)\neq +\infty$, even if we define $O_n$ not
as $O_n(x)$ but as $O_n(\varphi _3(x))$. Finally, observe that  similar examples exist
for every $r$.

Now, we present Theorem \ref{main+} that characterizes the set of points where $f'^+_r(x)=+\infty$, and therefore  it extends Proposition \ref{derechapar}~(\ref{item:6}) and Proposition
\ref{necesaria1}~(\ref{item:4}) for all $r\geq 2$. On the other hand, 
with respect to conditions that guarantee that $f'^-_r(x)=+\infty$, Theorem \ref{main-} generalizes the results that appear in \cite{AK1} and \cite{KR}.

Let $r\geq 2$ and $x\not\in D\cup\widetilde{D}$, we arrange 
the infinite set $\{i:\varepsilon_i\neq \frac{r-1}{2}\}$ as an increasing sequence $\{i_n\}_n$. 
Observe that if $r$  is even, that set is $\mathbb{N}$.

\begin{lemma}
  \label{sec:char-infin-deriv-3}
Let $r\geq 2$ be an  integer and $\ell \geq 2$. Then, 
there exists $x_0\in (0,\ell)$ satisfying that $r^{-x_0}(\ell-x_0)+x_0<\log_r \ell+3$.
\end{lemma}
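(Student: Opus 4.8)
The plan is to reduce the claim to an elementary one-variable calculus exercise. Fix $\ell\geq 2$ and consider the auxiliary function $F\colon[0,\ell]\to\R$ given by $F(t)=r^{-t}(\ell-t)+t$. We want to exhibit some $x_0\in(0,\ell)$ with $F(x_0)<\log_r\ell+3$; equivalently, it suffices to show that $\min_{t\in[0,\ell]}F(t)<\log_r\ell+3$ and that the minimum is not attained at an endpoint (the latter is easy: $F(0)=\ell$ and $F(\ell)=\ell$, both of which exceed $\log_r\ell+3$ only when $\ell$ is large, so a little care is needed — see the last paragraph).

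First I would locate the critical point. Differentiating, $F'(t)=-(\ln r)\,r^{-t}(\ell-t)-r^{-t}+1$, so $F'(t)=0$ is equivalent to $r^{-t}\bigl((\ln r)(\ell-t)+1\bigr)=1$. Rather than solve this exactly, I would simply plug in the natural guess $t=\log_r\ell$ (which is $\geq 0$ since $\ell\geq 2$, and $<\ell$ since $\ell\geq 2$ forces $\log_r\ell<\ell$), and estimate $F$ there directly:
\begin{equation*}
F(\log_r\ell)=r^{-\log_r\ell}(\ell-\log_r\ell)+\log_r\ell=\frac{\ell-\log_r\ell}{\ell}+\log_r\ell=1-\frac{\log_r\ell}{\ell}+\log_r\ell.
\end{equation*}
Since $\log_r\ell\geq 0$, the term $-\log_r\ell/\ell$ is $\leq 0$, hence $F(\log_r\ell)\leq 1+\log_r\ell<\log_r\ell+3$. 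So $x_0=\log_r\ell$ already works, provided $x_0\in(0,\ell)$.

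It remains to handle the boundary cases of the interval $(0,\ell)$. When $\ell\geq 2$ and $r\geq 2$ we have $\log_r\ell\leq\log_2\ell<\ell$, so $x_0<\ell$; and $\log_r\ell\geq\log_r 2>0$, so $x_0>0$. Thus $x_0=\log_r\ell\in(0,\ell)$ is a legitimate choice and the estimate above gives $r^{-x_0}(\ell-x_0)+x_0\leq 1+\log_r\ell<\log_r\ell+3$, which is even stronger than required. The main (and only) obstacle is really just making sure the chosen point lies strictly inside $(0,\ell)$ and bookkeeping the inequality $\log_r\ell<\ell$ for all $r\geq 2,\ \ell\geq 2$; once that is in place the computation is immediate and the slack of $3$ (versus the sharp constant $1$) leaves ample room.
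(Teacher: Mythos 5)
Your proof is correct, and it takes a genuinely different (and more elementary) route than the paper. The paper works with $\varphi(x)=r^{-x}(\ell-x)+x-\log_r\ell$, notes $\varphi(0)=\varphi(\ell)$, and invokes Rolle's theorem to produce an interior critical point $x_0$ characterized implicitly by $(\ell-x_0)\log r+1=r^{x_0}$; it then uses this identity to bound $\varphi(x_0)\log r=1-r^{-x_0}+\log\frac{r^{x_0}}{\ell}<1+\log(1+\log r)<3\log r$. You instead bypass any existence argument by evaluating at the explicit point $x_0=\log_r\ell$, where $r^{-x_0}=1/\ell$ gives $F(x_0)=1-\frac{\log_r\ell}{\ell}+\log_r\ell\leq 1+\log_r\ell$, which is even sharper than the required bound (excess $<1$ rather than $<3$); the only checks needed are $\log_r\ell\geq\log_r 2>0$ and $\log_r\ell\leq\log_2\ell<\ell$ (the latter is standard, e.g.\ from $2^\ell\geq 1+\ell>\ell$ for $\ell\geq 1$, and you could spell it out in one line). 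Since the lemma is only ever used through its stated conclusion — the subsequent theorem takes an arbitrary $x_0$ furnished by the lemma — your explicit choice is fully compatible with the rest of the paper. The opening digression about whether the minimum is attained at an endpoint is unnecessary and can be dropped, since your argument never uses the minimum, only the value at one interior point.
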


\begin{proof}
  We consider the function $\varphi(x)=r^{-x}(\ell-x)+x-\frac{\log \ell}{\log r}$ defined on $[0,\ell]$ and we observe that $\varphi(0)=\varphi(l)=l-\frac{\log \ell}{\log r}$. There exists $x_0\in (0,\ell)$ such that $(\ell - x_0)\log r +1=r^{x_0}$.
 Hence,
  \begin{align*}
    \varphi(x_0)\log r&=1-r^{-x_0} +\log \frac{r^{x_0}}{\ell}<1+\log(1+\log r)<1+\log r<3\log r. 
  \end{align*}
\end{proof}

\begin{thm}\label{main+}
$f'^{+}_r(x)=+\infty$ if and only if
$$
\lim_n \sum_{k=1}^{i_n}g'_k(x)- (i_{n+1}-i_n)+\log_r(i_{n+1}-i_n)=+\infty.
$$
\end{thm}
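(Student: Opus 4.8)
The plan is to track the difference quotients $\frac{f_r(x+h)-f_r(x)}{h}$ along two interlocking scales: the ``coarse'' scale given by the stopping points $d_{m_k}$ from Remark~\ref{sec:char-infin-deriv-2} (where a genuine point of $D_n\cup\widetilde D_n$ sits to the right of $x$), and the ``digit'' scale given by the sequence $\{i_n\}$ of indices with $\varepsilon_{i_n}\neq\frac{r-1}{2}$. The quantity $\sum_{k=1}^{i_n}g'_k(x)$ is, up to the bounded correction from unfinished blocks of middle digits, just $O_{i_n}-I_{i_n}$ for the effective expansion, so it is the natural ``partial slope'' to compare against. The key elementary estimate is that when $h$ lies between two consecutive critical scales, the contribution of the tail $\sum_{m>M}\frac{g_m(x+h)-g_m(x)}{h}$ is controlled by $\frac1{2h}\sum_{m>M}r^{-(m-1)}$, exactly as in \eqref{eq:4} and \eqref{eq:6}, while the ``active'' block of indices contributes a term whose worst case is measured by Lemma~\ref{sec:char-infin-deriv-3}: choosing $h$ inside a block of length $\ell=i_{n+1}-i_n$ incurs a loss of at most $(i_{n+1}-i_n)-\log_r(i_{n+1}-i_n)$ plus an absolute constant. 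This is precisely why the correction $-(i_{n+1}-i_n)+\log_r(i_{n+1}-i_n)$ appears in the statement.

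For the \emph{sufficiency} direction, assume the displayed limit is $+\infty$. Given any $h>0$ small, locate the block: there is a unique $n$ with $d_{i_{n+1}}< h\le d_{i_n}$ (using that the $d_j$ are non-increasing and strictly positive, Remark~\ref{sec:char-infin-deriv-2}), and within that range we estimate
$$
\frac{f_r(x+h)-f_r(x)}{h}\ge \sum_{k=1}^{i_n}g'_k(x)+\sum_{m=i_n+1}^{i_{n+1}}\frac{g_m(x+h)-g_m(x)}{h}-\frac{r}{r-1},
$$
mimicking \eqref{eq:6}. The middle sum is minimized, over the admissible $h$, by a quantity bounded below by $-(i_{n+1}-i_n)+\log_r(i_{n+1}-i_n)-3$ by Lemma~\ref{sec:char-infin-deriv-3} (with $\ell=i_{n+1}-i_n$, after rescaling so the block sits on $[0,\ell]$ and the optimal threshold $x_0$ corresponds to where one switches from adding full slopes to paying the $\frac1{2h}$ tail penalty). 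Hence the difference quotient is at least
$$
\sum_{k=1}^{i_n}g'_k(x)-(i_{n+1}-i_n)+\log_r(i_{n+1}-i_n)-\frac{r}{r-1}-3,
$$
which tends to $+\infty$ with $n$, hence with $h\downarrow 0$; a small extra argument handles the finitely many blocks of length $1$ (where the loss term is $0$) so no sub-sequence escapes.

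For the \emph{necessity} direction, suppose the displayed limit does not tend to $+\infty$; then along some subsequence $\sum_{k=1}^{i_n}g'_k(x)-(i_{n+1}-i_n)+\log_r(i_{n+1}-i_n)$ stays bounded above by some $C$. For each such $n$ I would choose $h_n$ to be the \emph{worst} admissible value in $(d_{i_{n+1}},d_{i_n}]$, namely the one realizing (up to the absolute constant of Lemma~\ref{sec:char-infin-deriv-3}) the maximal cancellation in the active block: essentially $h_n$ of order $r^{x_0}$ times the block's base scale, where now one exploits that across the run of equal digits $\varepsilon_m=\frac{r-2}{2}$ or $\varepsilon_m=r-1$ the functions $g_m$ turn downward, so the block sum is at most $-(i_{n+1}-i_n)+\log_r(i_{n+1}-i_n)+3$. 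Combined with Lemma~\ref{sec:char-infin-deriv} to replace $\sum_{k=1}^{i_n}g'_k(x)$ by the genuine difference quotient up to a bounded error, this yields $\frac{f_r(x+h_n)-f_r(x)}{h_n}\le C'$ along $h_n\downarrow 0$, so $f'^+_r(x)\neq+\infty$. The main obstacle, and the place where care is needed, is exactly this construction of the extremal $h_n$: one must verify that within a maximal block of middle-valued digits the relevant $g_m$ really are linear with slope $+1$ up to the critical scale and then reverse (this is the content of cases (2) and (3) of Remark~\ref{sec:char-infin-deriv-2} together with the digit rules for $g'_m$), and that the optimization in Lemma~\ref{sec:char-infin-deriv-3} genuinely captures the extremum of the piecewise-linear lower bound rather than just some value of it — i.e. matching the constant $3$ on both sides so the ``if and only if'' closes without a gap.
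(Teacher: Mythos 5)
Your overall strategy is the paper's: split the difference quotient at the scale of the digit blocks $\{i_n\}$, control the tail by $\frac1{2h}\sum_{m>M}r^{-(m-1)}$, and use the optimization of $t\mapsto r^{-t}(\ell-t)+t$ to explain the correction $-(i_{n+1}-i_n)+\log_r(i_{n+1}-i_n)$. But as written there are two genuine gaps. First, in the sufficiency direction your key claim --- that for every admissible $h$ the middle block satisfies $\sum_{m=i_n+1}^{p}\Delta_m(h)\geq -(i_{n+1}-i_n)+\log_r(i_{n+1}-i_n)-3$ ``by Lemma~\ref{sec:char-infin-deriv-3}'' --- is both a misuse of that lemma (it asserts the existence of a point $x_0$ where the function is \emph{small}, i.e.\ an upper bound at a special scale, used in the paper only for the converse) and simply false in the case $\varepsilon_{i_{n+1}}>\frac{r-1}{2}$: there $g'_m(x)=-1$ throughout the block and for $h\leq d_{i_{n+1}}$ one gets exactly $\Delta_m(h)=-1$, so the block sum can be as low as $-(i_{n+1}-i_n)$ with no $\log_r$ gain. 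The paper's proof survives this because it splits cases: when $\varepsilon_{i_{n+1}}>\frac{r-1}{2}$ the lower bound is $\sum_{k=1}^{i_{n+1}}g'_k(x)$ minus a constant, which dominates the hypothesis expression at index $n+1$ (since $-\ell+\log_r\ell\leq 0$), while when $\varepsilon_{i_{n+1}}<\frac{r-1}{2}$ the explicit computation $g_k(x+h)-g_k(x)=2d_k-h$ and the elementary inequality $t+\ell r^{-t}\geq\log_r\ell$ (the chain \eqref{muchasDesigualdades}, not Lemma~\ref{sec:char-infin-deriv-3}) give the bound at index $n$. You also skip the verification that $\Delta_k(h)\geq g'_k(x)$ for $k\leq i_n$, which fails to be an equality precisely when $\varepsilon_{i_n}=r-1$ and needs the separate argument with $k_0=\max\{k<i_n:\varepsilon_k<r-1\}$.

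Second, the necessity direction is only a plan: the whole content there is the explicit construction of the bad $h_n$, which you flag but do not carry out. The paper does it by taking $x_0$ from Lemma~\ref{sec:char-infin-deriv-3} with $\ell=i_{n+1}-i_n-5$, setting $p=i_{n+1}-5-[x_0]$, $\alpha=[x_0]+1-x_0$, $2h=r^{-(p+\alpha)}$, verifying linearity of $g_k$ on $[x,x+h]$ for $k\leq i_n$ (so $\Delta_k(h)=g'_k(x)$ exactly --- your appeal to Lemma~\ref{sec:char-infin-deriv} is not the right tool, since that lemma only applies at the endpoints of the $\mathcal F_n$-components), handling separately the easy case $\varepsilon_{i_{n+1}}>\frac{r-1}{2}$ with $h=\frac1{2r^{i_{n+1}-1}}$, and disposing of short blocks ($i_{n+1}-i_n<7$) via Proposition~\ref{necesaria1}. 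Note also that for odd $r$ the run inside a block consists of digits $\frac{r-1}{2}$ (Remark~\ref{sec:char-infin-deriv-2}(3)), not $\frac{r-2}{2}$ or $r-1$ as you state. So the skeleton matches the paper, but both directions need the case analysis and the explicit choice of scales to close; as it stands the argument does not prove the equivalence.
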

\begin{proof}
%%%%%%%%%%%%%%%%%%

We only have to prove the $r$ odd case, since for $r$ even, 
the condition reduces to the convergence to $+\infty$ of the series, and then the
result follows from Proposition \ref{derechapar} and Proposition
\ref{necesaria1}~(\ref{item:4}).
%%%%%%%%%%%%%%%%%%

``If'' part: Let
$\frac{1}{r^{p+1}}\leq
2h<\frac{1}{r^{p}}$ and let $n$ be such that
$i_n< p \leq i_{n+1}$. 
We denote
$$
\Delta_k(h)=\frac{g_k(x+h)-g_k(x)}{h}.
$$
For $k=1,2,\ldots$ we have that $\Delta _k(h)\in [-1,1]$. Now, we observe that if $m\geq p$ then
  \begin{equation}
    \label{eq:7}
    \left|\frac{f_r(x+h)-f_r(x)}h-\sum_{k=1}^m
   \Delta_k(h)\right| = \left|\sum_{k=m+1}^\infty\Delta_k(h)\right|\leq  \frac1{2h}\sum_{k=m+1}^{\infty}\frac1{r^{k-1}} \leq\frac{r^2}{r-1}.
  \end{equation}

  Let $1\leq k\leq i_n$.
  If
$\varepsilon_{i_n}\neq r-1$ then,  by Remark 
  \ref{sec:char-infin-deriv-2}, we have that $d_{i_n+1}< d_{i_n}$  and
  therefore $d_k\geq d_{i_n}\geq
  \frac1{2r^{i_n}}\geq\frac1{2r^{p-1}}>h$. Hence, 
  $g_k$ restricted to $[x,x+h]$ is linear for all $1\leq k\leq i_n$ and consequently $\Delta_k(h)=g'_k(x)$.

  If $\varepsilon_{i_n}= r-1$ then let $k_0=\max\{k<i_n:\;
\varepsilon_k<r-1\}$. In this case, again by Remark
\ref{sec:char-infin-deriv-2}, we have that $d_{k_0}>d_{k_0+1}$ and $d_{k_0}\geq \frac{1}{2r^{k_0}}\geq\frac1{2r^{i_n+1}}>h$. Therefore, $g_k$ restricted
to $[x,x+h]$ is linear for all $1\leq k\leq k_0 $ and then $\Delta
_k(h)=g_k'(x)$. As $g_k'(x)=-1$ if
$k_0<k\leq i_n$ then $\Delta
_k(h)\geq g_k'(x)$.

Thus,  in both cases we obtain that
$$ 
\sum_{k=1}^{i_n} \Delta _k(h)\geq \sum_{k=1}^{ i_n}g'_k(x).
$$
In particular, if $p=i_{n+1}= i_n+1$ we have, by \eqref{eq:7}, 
\begin{equation*}
    \sum_{k=1}^{\infty} \Delta _k(h)\geq  \sum_{k=1}^{i_n}g'_k(x) -2- \frac{r^2}{r-1}.
\end{equation*}
In what follows, we assume that $i_{n+1}- i_n>1$. 

If $\varepsilon_{i_{n+1}}>\frac{r-1}{2}$ then $g'_k(x)=-1$ for all $i_n<k\leq i_{n+1}$ and 
hence, by \eqref{eq:7}, we obtain 
\begin{equation*}
\sum_{k=1}^{\infty} \Delta _k(h) 
\geq   \sum_{k=1}^{i_{n+1}}g'_k(x)- \frac{r^2}{r-1}. 
\end{equation*}

If $\varepsilon_{i_{n+1}}<\frac{r-1}{2}$ then $x+d_k\in\widetilde{D}_k$ for all $i_n<k\leq i_{n+1}$ and, by Remark \ref{sec:char-infin-deriv-2}, we obtain that
$d_{i_n+1}=\dotsb=d_{i_{n+1}}<\frac1{2r^{i_{n+1}-1}}$.
If $h\leq d_{i_{n+1}}$ then
\begin{equation*}
  \sum_{k=1}^{\infty} \Delta _k(h) \geq                                       \sum_{k=1}^{i_{n+1}}g'_k(x)-\frac{r^2}{r-1}.
\end{equation*}
If $h>d_{i_{n+1}}$ we have that $x+d_k<x+h\leq x+\frac1{2r^p}$ and 
then
\begin{equation*}
g_k(x+h)-g_k(x)=x+d_k+\frac1{2r^{k-1}}-(x+h)-\left(x-\left(x+d_k-\frac1{2r^{k-1}}\right)\right)=2d_k-h,  
\end{equation*}
for every $k$ such that $i_n< k \leq  p$. By Remark \ref{sec:char-infin-deriv-2}, we have that $d_{i_{n+1}}>d_{i_{n+1}+1}$ and then $d_{i_{n+1}}\geq \frac{1}{2r^{i_{n+1}}}$. Therefore, 
\begin{multline}\label{muchasDesigualdades}
\sum_{k=i_n+1}^{p}\Delta _k(h)\geq (p-i_n)(-1+r^{p-i_{n+1}}) =-(p-i_n)+(p-i_n)r^{p-i_{n+1}}\\ =
-(i_{n+1}-i_n)-(p-i_{n+1})+(i_{n+1}-i_n)r^{p-i_{n+1}}-(i_{n+1}-p)r^{p-i_{n+1}}\\\geq -(i_{n+1}-i_n)+\log_r(i_{n+1}-i_n)-(i_{n+1}-p)r^{p-i_{n+1}}
 \\\geq
-(i_{n+1}-i_n)+\log_r (i_{n+1}-i_{n})-1.
\end{multline}
Finally, we conclude that 
\begin{equation*}
\sum_{k=1}^{\infty}\Delta _k(h)\geq \sum_{k=1}^{i_n}g'_k(x)-(i_{n+1}-i_n)+\log_r (i_{n+1}-i_{n})-1-\frac{r^2}{r-1}
\end{equation*}
which gives us the result.

For the converse we may assume that $i_{n+1}-i_n\geq 7$, since 
otherwise 
$$
\sum_{k=1}^{i_n}g'_k(x)-(i_{n+1}-i_n)+\log_r(i_{n+1}-i_n)\geq 
\sum_{k=1}^{i_n}g'_k(x)-7
$$
and the result follows from Proposition \ref{necesaria1}.

First, assume that $\varepsilon_{i_{n+1}}> \frac{r-1}2$. By Remark \ref{sec:char-infin-deriv-2}, we have that $d_{i_{n+1}-1}>d_{i_{n+1}}$ and then $d_{i_{n+1}-1}\geq \frac{1}{2r^{i_{n+1}-1}}$. Therefore, $\Delta_k\left(\frac{1}{2r^{i_{n+1}-1}}\right)=g'_k(x)$ for all $1\leq k \leq i_{n+1}-1$ and, by \eqref{eq:7}, we obtain  
\begin{equation*}
\sum_{k=1}^{\infty}
  \Delta_k\left(\frac{1}{2r^{i_{n+1}-1}}\right)-\frac{r^2}{r-1}\leq   \sum_{k=1}^{i_{n+1}} g'_k(x) .
\end{equation*}

On the other hand, if $\varepsilon_{i_{n+1}}< \frac{r-1}2$ and
$i_n<p\leq i_{n+1}-5$  then $d_{i_{n+1}}<d_{i_n}$ and,  as we have seen in the first part of the proof that, for
$\frac1{r^{p+1}}\leq 2h=\frac1{r^{p+\alpha}}<\frac1{r^p}$ we have
\begin{multline*}
  \sum_{k=1}^{p}\Delta
  _k(h)=\sum_{k=1}^{i_{n}}g'_k(x)+(p-i_n) \left(\frac{2d_{i_{n+1}}}{h}-1\right)\\
\leq \sum_{k=1}^{i_{n}}g'_k(x)+(p-i_n)\left(r^{p+\alpha-i_{n+1}+2}-1\right).
\end{multline*}
Let $x_0$ be as in Lemma \ref{sec:char-infin-deriv-3} for
$\ell=i_{n+1}-i_n-5$ and let
$p=i_{n+1}-5-[x_0]$. It is clear that $ i_n< p \leq i_{n+1}-5$. Let
$\alpha=[x_0]+1-x_0$ and $2h= r^{-(p+\alpha)}$. Then
\begin{equation*}
  (p-i_n+[x_0]-x_0)r^{p+\alpha-i_{n+1}+2}+x_0\leq\log_r(i_{n+1}-i_n-5)+3
\end{equation*}
and therefore

  \begin{multline*}
    (p-i_n)\left(r^{p+\alpha-i_{n+1}+2}-1\right) \leq 
    (p-i_n+[x_0]-x_0)\left(r^{p+\alpha-i_{n+1}+2}-1\right) \\
    \leq-(i_{n+1}-i_n)+\log_r(i_{n+1}-i_n)+8.
  \end{multline*}
Hence, by \eqref{eq:7}, we have
\begin{align*}
\frac{f_r(x+h)-f_r(x)}{h}&\leq \sum_{k=1}^{i_{n}}g'_k(x)-(i_{n+1}-i_n)+\log_r(i_{n+1}-i_n)+8+\frac{r^2}{r-1}.
\end{align*}
Letting $n$ to infinite, and therefore $h$ to $0^+$, we obtain
$$
\lim_n\left ( \sum_{k=1}^{i_n}g'_k(x)-(i_{n+1}-i_n)+\log _r(i_{n+1}-i_n)\right ) =+\infty.
$$
\end{proof}

\begin{remark}
In Example \ref{example},  $i_n=10^n$. Hence
\begin{multline*}
\lim_n\left( \sum_{k=1}^{i_n}g'_k(x)-(i_{n+1}-i_n)+\log _r(i_{n+1}-i_n)\right)\\
=\lim_n \left( 2i_n-i_{n+1}+ \log _r(i_{n+1}-i_n)\right) =-\infty .
\end{multline*}
Therefore $f'^+_3(x)\neq +\infty$. As a matter of fact it is not difficult to prove directly that
$d^+f_3(x)=-\infty$.
\end{remark}

Let $r\geq 2$ and $x\not\in D\cup\widetilde{D}$, we arrange the infinite set $\{ n:\varepsilon _n\neq 0\}$ as an increasing sequence 
$\{ n_k\}_k$.

\begin{thm}\label{main-}
$f'^-_r(x)=+\infty$ if and only if
$$
\lim_k\sum_{n=1}^{n_k}g'_n(x)-(n_{k+1}-n_k)+\log_r(n_{k+1}-n_k)=+\infty
$$
\end{thm}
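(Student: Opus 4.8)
The plan is to run exactly the scheme used for Theorem \ref{main+}, only ``reflected'': one replaces the right increment $x+h$ by the left one $x-h$, and the sequence $\{i_n\}$ of positions where $\varepsilon_i\neq\tfrac{r-1}2$ by the sequence $\{n_k\}$ of positions where $\varepsilon_n\neq 0$. One structural difference should be stressed at the outset: whereas in Theorem \ref{main+} the even-$r$ case collapses to ``$\sum g'_m(x)\to+\infty$'' and is dispatched through Propositions \ref{derechapar} and \ref{necesaria1}, here it does not, because $\{n:\varepsilon_n\neq 0\}$ is never all of $\mathbb{N}$; so the argument must be carried out uniformly for every $r\geq2$. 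Throughout I fix $x\notin D\cup\widetilde D$, put $\widetilde d_n=\min\{x-y:y\in D_n\cup\widetilde D_n,\ y<x\}$ and $\Delta_n^-(h)=\frac{g_n(x)-g_n(x-h)}{h}\in[-1,1]$, and record that $\{\widetilde d_n\}$ is non-increasing, that $\frac{f_r(x)-f_r(x-h)}{h}=\sum_n\Delta_n^-(h)$, and that the tail obeys $\bigl|\sum_{n>m}\Delta_n^-(h)\bigr|\leq\frac{r^2}{r-1}$ whenever $r^{-(m+1)}\leq 2h$, in complete parallel with \eqref{eq:7}.

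The one genuinely new computation concerns a \emph{block} of indices $n_k<m\leq n_{k+1}$. For $n_k<m<n_{k+1}$ one has $\varepsilon_m=0$, hence $g'_m(x)=1$, and $g_m$ attains its minimum at the common point $z=\hat x_{n_k+1}\in D_m$, which lies at distance $\delta:=x-z=\sum_{j\geq n_{k+1}}\varepsilon_j r^{-j}\in\bigl[\,r^{-n_{k+1}},\ r^{-(n_{k+1}-1)}\,\bigr)$ to the left of $x$. Reading off the tent shape of $g_m$ one gets, for every such $m$, that $\Delta_m^-(h)=1$ for $0<h\leq\delta$ and $\Delta_m^-(h)=\frac{2\delta}{h}-1$ for $\delta<h\leq\delta+\frac1{2r^{m-1}}$; in particular all the block terms cross from $+1$ toward $-1$ at the \emph{same} threshold $h=\delta$, the exact analogue of the midpoint terms in the ``bad case'' of Theorem \ref{main+}. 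For the head indices $m\leq n_k$ one has $\Delta_m^-(h)=g'_m(x)$ as soon as $h<\widetilde d_{n_k}$ (since $\{\widetilde d_m\}$ is non-increasing), and, by the left-hand analogue of Remark \ref{sec:char-infin-deriv-2}, this holds for all $h$ relevant in what follows except in the degenerate cases where $\varepsilon_{n_k}\in\{\frac r2,\frac{r-1}2\}$ and the subsequent digits place $x$ just to the right of the midpoint of its $D_{n_k}$-interval; those are handled by backing up to a suitable earlier index $k_0<n_k$ with $g'_m(x)=-1$ on $k_0<m\leq n_k$, so that $\Delta_m^-(h)\geq g'_m(x)$ is still true — precisely the device from the proof of Theorem \ref{main+}.

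With these two facts in hand, both implications run as there. For ``if'', given small $h>0$ one picks $p$ with $r^{-(p+1)}\leq 2h<r^{-p}$ and $k$ with $n_k<p\leq n_{k+1}$, splits $\frac{f_r(x)-f_r(x-h)}{h}$ into head ($\geq\sum_{m=1}^{n_k}g'_m(x)$ up to an $O(1)$ slip), block, and tail ($\geq-\frac{r^2}{r-1}$), and — using the displayed formulas together with $2\delta\geq r^{-n_{k+1}}$ — bounds the block contribution below by $(p-n_k)\bigl(r^{\,p-n_{k+1}}-1\bigr)$, which, by the same elementary estimate as in \eqref{muchasDesigualdades}, is $\geq-(n_{k+1}-n_k)+\log_r(n_{k+1}-n_k)-O(1)$; since $h\to0^{+}$ forces $k\to\infty$, this gives the conclusion. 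For ``only if'', one may assume $n_{k+1}-n_k$ large (otherwise the displayed limit reduces to $\sum_{m=1}^{n_k}g'_m(x)-O(1)$ and the statement follows from Proposition \ref{necesaria1}); if the displayed limit fails to be $+\infty$, then along a subsequence of $k$'s one chooses the adversarial increment by invoking Lemma \ref{sec:char-infin-deriv-3} with $\ell\approx n_{k+1}-n_k$ to obtain the exponent $x_0$ — hence $p=n_{k+1}-5-[x_0]$ and $2h=r^{-(p+\alpha)}$ with $\alpha=[x_0]+1-x_0$ — for which the block contribution is driven down to $-(n_{k+1}-n_k)+\log_r(n_{k+1}-n_k)+O(1)$; then $\frac{f_r(x)-f_r(x-h)}{h}\leq\sum_{m=1}^{n_k}g'_m(x)-(n_{k+1}-n_k)+\log_r(n_{k+1}-n_k)+O(1)$ stays bounded along the subsequence, so $f'^{-}_r(x)\neq+\infty$.

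The hard part, as in Theorem \ref{main+}, is to make the block estimate sharp enough to produce precisely the correction $\log_r(n_{k+1}-n_k)$; Lemma \ref{sec:char-infin-deriv-3} is tailored for exactly this, so what remains is bookkeeping — tracking, at each base-$r$ scale of $h$, which block terms have already passed their common minimum $z$ (together with the constraint $h\leq\delta+\frac1{2r^{m-1}}$), and absorbing into $O(1)$ errors both the endpoint digit $\varepsilon_{n_{k+1}}\in\{1,\dots,r-1\}$ (which for odd $r$ may equal $\frac{r-1}2$, so that $g'_{n_{k+1}}(x)$ is determined only by subsequent digits) and the head/backup step, which here is needed for both parities of $r$ rather than just for odd $r$. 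As an aside, one could instead deduce Theorem \ref{main-} from a ``$-\infty$'' counterpart of Theorem \ref{main+} via the symmetry $f'^{-}_r(x)=+\infty\iff f'^{+}_r(1-x)=-\infty$, but since that counterpart is itself most cleanly obtained from Theorem \ref{main-}, the direct argument above is the economical one.
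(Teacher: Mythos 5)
Your scheme is exactly the paper's: left increments $\Delta^-_n(h)$, the tail bound as in \eqref{eq:100}, the block of zero digits $n_k<m\leq p$ collapsing onto the common left endpoint $\hat x_{n_k+1}$ with the tent formula, the estimate of \eqref{muchasDesigualdades} for the lower bound, Lemma \ref{sec:char-infin-deriv-3} with $p=n_{k+1}-5-[x_0]$, $2h=r^{-(p+\alpha)}$ for the adversarial increment, and Proposition \ref{necesaria1} to dispose of the indices with $n_{k+1}-n_k$ small. The ``if'' half as you sketch it is sound; in fact for the head $m\leq n_k$ you do not need any backing-up device there: when $g'_m(x)=1$ one gets $\Delta^-_m(h)=1$ exactly, because $h<\frac1{2r^{p}}\leq\frac{1}{2r^{n_k}}\leq x-\hat x_{n_k}\leq x-\hat x_m$ (this is where $\varepsilon_{n_k}\neq0$ is used), and when $g'_m(x)=-1$ the trivial bound $\Delta^-_m(h)\geq-1$ already suffices for a lower estimate.

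The genuine gap is in the ``only if'' half, in your treatment of the head. There you need an \emph{upper} bound $\sum_{m\leq n_k}\Delta^-_m(h)\leq\sum_{m\leq n_k}g'_m(x)+O(1)$ with a constant independent of $k$, and your remedy for the degenerate case --- back up to $k_0<n_k$ with $g'_m(x)=-1$ on $k_0<m\leq n_k$ and use $\Delta^-_m(h)\geq g'_m(x)$ --- is an inequality in the wrong direction: it is the device from the ``if'' part of Theorem \ref{main+} and yields a lower bound, which is useless here; read as an error estimate it could cost up to $2(n_k-k_0)$, which is not a priori bounded. What is actually needed, and what the paper proves, is that the deviation is confined to the single index $n_k$. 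One first observes that if $\Delta^-_n(h)=g'_n(x)$ (i.e.\ $g_n$ is linear on $[x-h,x]$) then $\Delta^-_m(h)=g'_m(x)$ for all $m\leq n$, since $D_m\cup\widetilde D_m\subset D_n\cup\widetilde D_n$; then a short case analysis does the rest: if $g'_{n_k}(x)=1$ or $\varepsilon_{n_k}>\frac r2$ then $\Delta^-_{n_k}(h)=g'_{n_k}(x)$ directly (using $h<\frac1{2r^{n_k}}$ and $\varepsilon_{n_k}\neq0$), while if $\varepsilon_{n_k}=\frac r2$ (only possible for even $r$) one checks, splitting on whether $\varepsilon_{n_k-1}=\frac r2$ or not, that at least $\Delta^-_{n_k-1}(h)=g'_{n_k-1}(x)$, so the only possible loss is the single term at $n_k$ and $\sum_{m\leq n_k}\Delta^-_m(h)\leq\sum_{m\leq n_k}g'_m(x)+2$. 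Note also that in the regime $n_{k+1}-n_k\geq7$ the case $\varepsilon_{n_k}=\frac{r-1}2$ you list as degenerate is not: since $\varepsilon_{n_k+1}=0$, the point $x$ lies to the \emph{left} of the midpoint of its $D_{n_k}$-interval and at distance at least $r^{-n_k}>2h$ from $\hat x_{n_k}$, so $g'_{n_k}(x)=1$ and the head is exact there. With this replacement (and the cosmetic fix of taking $n_k\leq p<n_{k+1}$ in the ``if'' part, so the common-endpoint block formula applies to every $m\leq p$), your argument coincides with the paper's proof.
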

\begin{proof}
``If" part: Let $\frac{1}{r^{p+1}}\leq 2h<\frac{1}{r^p}$, let $k$ such that $n_k\leq p< n_{k+1}$. We denote 
$$
\Delta _n(h)=\frac{g_n(x-h)-g_n(x)}{-h}.
$$
Now, as in Theorem \ref{main+}, we have that if $m\geq p$ then 
  \begin{equation}
    \label{eq:100}
     \left|\sum_{n=m+1}^\infty\Delta_n(h)\right| \leq\frac{r^2}{r-1}.
  \end{equation}

Let $1\leq n\leq  n_k$. If $g'_n(x)=1$ then $g_n(x)=x-\hat{x}_n$, and
$g_n(x-h)=x-h-\hat{x}_n$ since $h<\frac{1}{r^p}\leq \frac{1}{r^{n_k}}\leq x-\hat{x}_{n_k}\leq x-\hat{x}_n$. Consequently, $\Delta _n(h)=g'_n(x)=1$.
	Thus, we obtain that
$$
\sum_{n=1}^{n_k}\Delta_n(h)\geq \sum_{n=1}^{n_k}g'_n(x).
$$
In particular, if $n_{k+1}=n_k+1$ we have, by \eqref{eq:100}
$$
\sum_{n=1}^{\infty}\Delta_n(h)\geq \sum_{n=1}^{n_k}g'_n(x) -\frac{r^2}{r-1}.
$$
In what follows, we assume that $n_{k+1}- n_k>1$. 

On the other hand, let $n_k< n\leq p$. We have that $g'_n(x)=1$ and $\hat{x}_n=\hat{x}_p$. If $x-\hat{x}_p\leq h$ then $g_n(x-h)=\hat{x}_p-(x-h)$ and 
$$
\Delta _n(h)=\frac{\hat{x}_p-(x-h)-(x-\hat{x}_p)}{-h}=-1+\frac{2(x-\hat{x}_p)}{h},
$$
meanwhile if $x-\hat{x}_p> h$ then $g_n(x-h)=x-h-\hat{x}_p$ and $\Delta _n(h)=1$. In both cases, proceeding in a similar way as in \eqref{muchasDesigualdades}, we deduce
\begin{multline*}
\sum_{n=n_k+1}^{p}\Delta_n(h)\geq (p-n_k)\bigl( -1+r^{p-n_{k+1}}\bigr)\\ 
\geq
-(n_{k+1}-n_k)+\log _r(n_{k+1}-n_k)-1.
\end{multline*}
Finally, we conclude that
\begin{align*}
\sum_{n=1}^{\infty}\Delta _n(h)\geq 
\sum_{n=1}^{n_k}g'_n(x)-(n_{k+1}-n_k)+\log_r(n_{k+1}-n_k)-1-\frac{r^2}{r-1}
\end{align*}
which gives us the result.

For the converse we may assume that $n_{k+1}-n_k\geq 7$ since for the indices that not satisfy this inequality,
the necessary estimation follows from Proposition \ref{necesaria1}.

Let $n_k<p\leq n_{k+1}-5$ and $\frac{1}{r^{p+1}}\leq 2h=\frac{1}{r^{p+\alpha}}<\frac{1}{r^p}$. First, we observe that if $\Delta_n(h)=g'_n(x)$ for some $n$, then $\Delta_k(h)=g'_k(x)$ for all $1\leq k\leq n$. As we have seen in the first part of the proof, if $g'_{n_k}(x)=1$ then $\Delta_{n_k}(h)=g'_{n_k}(x)=1$. However, we can say more:

If $\varepsilon_{n_k}>\frac{r}{2}$ then $g'_{n_k}(x)=-1$ and $g_{n_k}(x-h)=\hat{x}_{n_k}+\frac{1}{r^{n_k -1}}-(x-h)$ since 
$$
h<\frac{1}{2r^p}\leq \frac{1}{2r^{n_k}}\leq \frac{\varepsilon_{n_k}}{r^{n_k}}-\frac{1}{2r^{n_k -1}}\leq x-\hat{x}_{n_k}-\frac{1}{2r^{n_k -1}}.
$$
Therefore, $\Delta_{n_k}(h)=g'_{n_k}(x)=-1$.

If $\varepsilon_{n_k}=\frac{r}{2}$, what happens only when $r$ is even, we distinguish the following cases: If $\varepsilon_{n_k -1}=\frac{r}{2}$ then $\Delta_{n_k-1}(h)=g'_{n_k-1}(x)=-1$ since 
$$
h<\frac{1}{2r^p}\leq \frac{1}{2r^{n_k}}\leq \frac{\varepsilon_{n_k}}{r^{n_k}}= \frac{\varepsilon_{n_k-1}}{r^{n_k-1}}+\frac{\varepsilon_{n_k}}{r^{n_k}}-\frac{1}{2r^{n_k -2}}\leq x-\hat{x}_{n_k-1}-\frac{1}{2r^{n_k -2}},
$$
meanwhile if $\varepsilon_{n_k -1}\neq\frac{r}{2}$ we are in one of the previous cases and then $\Delta_{n_k-1}(h)=g'_{n_k-1}(x)$.

Lastly, let us observe that if $\varepsilon_{n_k}=\frac{r-1}{2}$ then $g'_{n_k}(x)=1$ since $\varepsilon_{n_k +1}=0$. On the other hand, if $n_k<n\leq p$ then  $x-\hat{x}_n=x-\hat{x}_p\leq h$ and, as we have seen in the first part of the proof, we have that 
$$
\Delta _n(h)=-1+\frac{2(x-\hat{x}_p)}{h}.
$$
Finally, we conclude that 
\begin{multline*}
\sum_{n=1}^{p}\Delta
  _n(h)=\sum_{n=1}^{n_{k}-1}g'_n(x)+\Delta_{n_k}(h)+(p-n_k) \left(-1+\frac{2(x-\hat{x}_p)}{h}\right)\\
\leq \sum_{n=1}^{n_{k}}g'_n(x) + 2+(p-n_k)\left(r^{p+\alpha-n_{k+1}+3}-1\right),
\end{multline*}
and the result follows as in Theorem \ref{main+}.
\end{proof}

\begin{remark}
Let  $x$ be the point defined in Example \ref{example}. Then $f'^-_3(x)=+\infty$. 
\end{remark}

Theorems \ref{main-} and \ref{main+} allow us to deduce the following result for the derivatives $-\infty$.

\begin{corollary}
If we define $\{ p_n\}_n$ as the sequence on indices such that $\varepsilon _{p_n}\neq r-1$, then 
\begin{enumerate}
  \item $f'^+_r(x)=-\infty$ if and only if
   $$
  \lim_n\left( \sum_{k=1}^{p_n}g'_k(x)+(p_{n+1}-p_n)-\log _r(p_{n+1}-p_n)\right) =-\infty.
  $$
   \item $f'^-_r(x)=-\infty$ if and only if
    $$
  \lim_n\left( \sum_{k=1}^{i_n}g'_k(x)+(i_{n+1}-i_n)-\log _r(i_{n+1}-i_n)\right) =-\infty.
  $$
\end{enumerate}
\end{corollary}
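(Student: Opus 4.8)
The plan is to reduce both statements to Theorems \ref{main+} and \ref{main-} applied to the reflected point $1-x$, exploiting the symmetry $f_r(y)=f_r(1-y)$. From $f_r(1-y)=f_r(y)$ one computes difference quotients directly to get $f'^+_r(1-x)=-f'^-_r(x)$ and $f'^-_r(1-x)=-f'^+_r(x)$; in particular $f'^+_r(x)=-\infty$ exactly when $f'^-_r(1-x)=+\infty$, and $f'^-_r(x)=-\infty$ exactly when $f'^+_r(1-x)=+\infty$. The map $S(y)=1-y$ sends $D$ onto $D$ and $\widetilde D$ onto $\widetilde D$, so the hypothesis $x\notin D\cup\widetilde D$ is inherited by $1-x$, and we already know that $g'_k(1-x)=-g'_k(x)$ for every $k$.

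Next I would match up the digit data of $x$ and of $1-x$. Since $x\notin D$, its base-$r$ expansion $x=\sum_n\varepsilon_nr^{-n}$ is unambiguous and has both $\varepsilon_n\neq0$ and $\varepsilon_n\neq r-1$ for infinitely many $n$. Writing $1=\sum_n(r-1)r^{-n}$ we get that $1-x=\sum_n(r-1-\varepsilon_n)r^{-n}$ is the (equally unambiguous) expansion of $1-x$, with digits $\bar\varepsilon_n=r-1-\varepsilon_n$. Now $\bar\varepsilon_n=\frac{r-1}{2}$ iff $\varepsilon_n=\frac{r-1}{2}$, so the sequence ``$\{i_n\}$'' attached to $1-x$ in Theorem \ref{main+} coincides with the $\{i_n\}$ attached to $x$; and $\bar\varepsilon_n\neq0$ iff $\varepsilon_n\neq r-1$, so the sequence ``$\{n_k\}$'' attached to $1-x$ in Theorem \ref{main-} is exactly the sequence $\{p_n\}$ of the present corollary.

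Finally I would substitute and clean up signs. Applying Theorem \ref{main-} to $1-x$, whose relevant index sequence is $\{p_n\}$, gives that $f'^-_r(1-x)=+\infty$ iff $\lim_n\bigl(\sum_{k=1}^{p_n}g'_k(1-x)-(p_{n+1}-p_n)+\log_r(p_{n+1}-p_n)\bigr)=+\infty$; substituting $g'_k(1-x)=-g'_k(x)$ and multiplying the bracket by $-1$ converts this into $\lim_n\bigl(\sum_{k=1}^{p_n}g'_k(x)+(p_{n+1}-p_n)-\log_r(p_{n+1}-p_n)\bigr)=-\infty$, which, combined with $f'^-_r(1-x)=+\infty\iff f'^+_r(x)=-\infty$, is precisely statement (1). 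Likewise, applying Theorem \ref{main+} to $1-x$, whose relevant index sequence is $\{i_n\}$, and performing the same sign manipulation together with $f'^+_r(1-x)=+\infty\iff f'^-_r(x)=-\infty$, yields statement (2).

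The only delicate point is the bookkeeping in the second step — verifying that the ``$x\notin D$'' hypothesis makes the expansion of $1-x$ canonical and keeps both index sequences infinite, and that the defining conditions $\bar\varepsilon_n=\frac{r-1}{2}$ and $\bar\varepsilon_n=0$ correspond respectively to $\varepsilon_n=\frac{r-1}{2}$ and $\varepsilon_n=r-1$. Everything else is a mechanical translation, with no new analytic content beyond Theorems \ref{main+} and \ref{main-}.
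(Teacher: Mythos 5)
Your proposal is correct and follows exactly the paper's own route: the paper's proof consists precisely of the observation that $1-x=\sum_{k=1}^{\infty}\frac{(r-1)-\varepsilon_k}{r^k}$, so that the symmetry $S(x)=1-x$ (with $g'_k(1-x)=-g'_k(x)$ and the index sequences transforming as you describe) reduces both statements to Theorems \ref{main+} and \ref{main-}. Your write-up merely makes explicit the bookkeeping that the paper leaves as ``immediate.''
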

\begin{proof}
It is enough to observe that $1-x=\sum_{k=1}^{\infty}\frac{(r-1)-\varepsilon_k}{r^k}$ and the result follows immediately from the previous theorems. 
\end{proof}

\section{Hausdorff dimension}

The aim of the results that we present in this section is to prove that the set of points that have  infinite  derivative has Hausdorff dimension one.  Let $n\geq 3$ be an odd integer. We define the set
$$
B^+_{n}=\left\{ z=\sum_{j=1}^n\frac{\varepsilon _j}{r^j}>0: 
O_n(\varphi _r(z))-I_n(\varphi _r(z))\geq 1, \varepsilon _{n}\neq \frac{r-1}{2}\right\}
$$
and we observe that $B^+_{n}\subset D_{n+1}\setminus \{0,1\}$. We will consider 
$$
x=\sum_{j=1}^{\infty}\frac{\varepsilon _j}{r^j},\qquad \varepsilon_j\in\{0,\ldots,r-1\}
$$
and $\overline{x}_m=\hat{x}_{m}+\varepsilon_mr^{-m}$.
On the other hand, we define the set
$$
A^+_{n,k}=\left \{ x: r^{nk}\overline{x}_{n(k+1)}-[r^{nk}\overline{x}_{n(k+1)}]\in B^+_n\right \}.
$$
Observe that $A^+_{n,k}$ is a finite union of closed intervals whose endpoints belong to $D_{n+1}$. In this case, we choose the representation ending in all $r-1$  for the right endpoint of each interval.

Concerning the Hausdorff dimension we have the following results. 

\begin{lemma}\label{lemimpar}
If $r$ is odd, then the Hausdorff dimension of the set 
$$
A^+_n=\bigcap_{k=0}^{\infty}A^+_{n,k}
$$
is greater than or equal to $\frac{(n-1)\log r-1}{n\log r}$.
\end{lemma}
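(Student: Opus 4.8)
The plan is to recognise $A^+_n$ as a self-similar Cantor-type set obtained by prescribing, block by block of $n$ consecutive base-$r$ digits, which $n$-tuples are admissible, and then to deduce the dimension lower bound from a mass-distribution (Frostman) argument as soon as the number $N:=\#B^+_n$ of admissible blocks has been estimated from below. The dimension will come out equal to $\frac{\log N}{n\log r}$, so everything reduces to showing $\log N\ge (n-1)\log r-1$.

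First I would make the block description explicit: unwinding the definitions, $r^{nk}\overline x_{n(k+1)}-[r^{nk}\overline x_{n(k+1)}]=\sum_{i=1}^{n}\varepsilon_{nk+i}r^{-i}$, so for $x=\sum_j\varepsilon_j r^{-j}$ one has $x\in A^+_n$ precisely when, for every $k\ge 0$, the block $(\varepsilon_{nk+1},\dots,\varepsilon_{nk+n})$ represents an element of $B^+_n$. The only nuisance is the usual ambiguity of base-$r$ expansions at the endpoints of the finitely many intervals constituting each $A^+_{n,k}$; since this concerns only a countable set it does not affect the Hausdorff dimension, so I may work instead with the set $C$ of all $x$ whose every $n$-block, in the expansion ending in zeros, lies in $B^+_n$, which satisfies $\dim_H C=\dim_H A^+_n$.

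Next I would count $N$. Because $n$ is odd and every admissible block has last digit $\varepsilon_n\neq\frac{r-1}{2}$, the look-ahead defining $\varphi_r$ never leaves the block, so each of the $n$ positions of the block receives a corrected digit $\widetilde\varepsilon_j\neq\frac{r-1}{2}$; hence $O_n+I_n=n$, an odd number, and therefore $O_n\neq I_n$ for every such block. The digit reflection $\varepsilon_j\mapsto (r-1)-\varepsilon_j$ is a fixed-point-free involution on the set of the $r^{n-1}(r-1)$ blocks with $\varepsilon_n\neq\frac{r-1}2$, it fixes the set of positions where the digit equals $\frac{r-1}{2}$ (hence commutes with the look-ahead correction) and it interchanges the roles of $O_n$ and $I_n$; consequently exactly half of those blocks satisfy $O_n-I_n\ge 1$. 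Discarding the single all-zeros block (excluded by the requirement $z>0$) yields $N=\tfrac12 r^{n-1}(r-1)-1\ge r^{n-1}-1$ (using $r\ge 3$), so that $\log N\ge (n-1)\log r+\log(1-r^{1-n})>(n-1)\log r-1$ since $n\ge 3$, i.e. $r^{n-1}\ge 9$. This combinatorial step, and in particular checking that the reflection is compatible with both the $\varphi_r$-correction and the block boundaries (which is exactly where the oddness of $r$ and of $n$ together with $\varepsilon_n\neq\frac{r-1}{2}$ are used), is the one point I expect to require genuine care; the remaining pieces are routine.

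Finally, for the dimension bound I would equip $C$ with the natural Borel probability measure $\mu$ assigning mass $N^{-k}$ to each level-$k$ cylinder, i.e. to the $r$-adic interval of length $r^{-nk}$ determined by a choice of admissible blocks $w_0,\dots,w_{k-1}$; distinct cylinders of a given level have disjoint interiors. Put $s=\frac{\log N}{n\log r}$. For an interval $J$ with $r^{-n(k+1)}\le |J|<r^{-nk}$, the set $J$ meets at most two level-$k$ cylinders (they are pairwise disjoint in their interiors and longer than $|J|$, so a third one met by $J$ would be contained in $J$), whence $\mu(J)\le 2N^{-k}=2N\cdot N^{-(k+1)}\le 2N\,|J|^{s}$, because $|J|^{s}\ge r^{-n(k+1)s}=N^{-(k+1)}$. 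By the mass-distribution principle $\mathcal H^{s}(C)>0$, so $\dim_H A^+_n=\dim_H C\ge s=\frac{\log N}{n\log r}\ge\frac{(n-1)\log r-1}{n\log r}$, as claimed.
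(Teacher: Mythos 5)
Your proposal is correct and follows essentially the same route as the paper: you exploit the same block structure of $A^+_n$, obtain the same count $\#B^+_n=\tfrac{r^{n-1}(r-1)}{2}-1$ via the digit reflection $\varepsilon_j\mapsto(r-1)-\varepsilon_j$ (the paper phrases this through the symmetry $S(x)=1-x$ acting on the intervals $J_d$), and arrive at the dimension $\frac{\log \#B^+_n}{n\log r}$. The only difference is that you establish the lower bound by hand with a mass-distribution argument on cylinders, whereas the paper identifies $A^+_n$ as the attractor of the iterated function system $\{\varphi_d\}_{d\in B^+_n}$ satisfying Moran's open set condition and quotes the standard dimension formula; this is a self-contained reproof of the relevant half of that theorem rather than a genuinely different argument.
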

\begin{proof}
Let us consider the $r^n$ intervals $J_d=[d,d+\frac{1}{r^n}]$ for $d\in D_{n+1}$ and let $\varphi _d:[0,1]\to J_d$ be the contractive mapping  defined
by  $\varphi _d(x)=d+\frac{x}{r^n}$. 
We observe that the family $\{ \varphi _d\} _{d\in B^+_n}$ satisfies the Moran's open set condition (see \cite{E} or \cite{Fa} for instance).
Let us remember that the symmetry function  $S$ is defined by $S(x)=1-x$. We have that 
$S(J_d)\cap J_d=\emptyset$ for every $d\in B^+_n$ since $\varepsilon _n\neq \frac{r-1}{2}$.
It is easy to see that for $d_1, d_2$ satisfying $S(J_{d_1})=J_{d_2}$,
we have that if $d_1\in B_n^+$ then $d_2\notin B^+_n$, meanwhile if
$d_1\notin B_n^+$ then either $d_2\in B^+_n$ or $d_2=0$. From these facts we deduce that
$$
\# B^+_n=\frac{r^{n-1}(r-1)}{2}-1.
$$
Finally, let us observe that $A_n^+$ is the unique 
non empty compact set that satisfies
$$
A_n^+=\bigcup_{d\in B^+_n }\varphi _d(A_n^+).
$$
Indeed, it is enough to realize that if $y\in A^+_{n,k}$ then $\varphi_d(y)\in A^+_{n,k+1}$, meanwhile if $x\in A_n^+$ then $y=r^n\left ( x-\overline{x}_{n}\right )\in A_n^+$ and $\varphi_{\overline{x}_{n}}(y)=x.$
Consequently, the Hausdorff dimension of $A_n^+$ is
$$
\frac{\log (\# B^+_n)}{n\log r}=\frac{\log (r^{n-1}\frac{r-1}{2}-1)}{n\log r}\geq\frac{(n-1)\log r-1}{n\log r}.
$$ 
\end{proof}
\begin{lemma}\label{lempar}
If $r$ is even, then the Hausdorff dimension of the set 
$$
A^+_n=\bigcap_{k=0}^{\infty}A^+_{n,k}
$$
is greater than or equal to $1-\frac{2+\log 2}{n\log r}$.
\begin{proof}
Using the same argument as in the previous lemma, it is enough to observe that when $r$ is even we have  
$$
\# B^+_n=\frac{r^{n}-2}{2}.
$$
\end{proof}
\end{lemma}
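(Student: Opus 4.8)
The plan is to run the proof of Lemma~\ref{lemimpar} essentially unchanged, so that the only genuinely new ingredient is the evaluation of $\#B^+_n$ in the even case. Concretely, for $d\in D_{n+1}$ let $\varphi_d(x)=d+x/r^n$, a similarity of ratio $1/r^n$ carrying $[0,1]$ onto $[d,d+1/r^n]$; these $r^n$ intervals have pairwise disjoint interiors, so the subfamily $\{\varphi_d\}_{d\in B^+_n}$ satisfies Moran's open set condition with open set $(0,1)$. Exactly as in Lemma~\ref{lemimpar} one checks that $A^+_n$ is the attractor of this system, i.e. the unique nonempty compact set with $A^+_n=\bigcup_{d\in B^+_n}\varphi_d(A^+_n)$, using that $y\in A^+_{n,k}$ implies $\varphi_d(y)\in A^+_{n,k+1}$ and that $x\in A^+_n$ implies $r^n(x-\overline{x}_n)\in A^+_n$; whence $\dim_H A^+_n=\log(\#B^+_n)/(n\log r)$.

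It therefore remains to count $B^+_n$ when $r$ is even. Then $\tfrac{r-1}{2}\notin\Z$, so every digit satisfies $\varepsilon_j\neq\tfrac{r-1}{2}$, the map $\varphi_r$ is the identity, and $O_n(z)+I_n(z)=n$ for all $z$; since $n$ is odd, $O_n-I_n$ is odd, and the defining inequality $O_n(\varphi_r(z))-I_n(\varphi_r(z))\geq1$ becomes simply $O_n(z)>I_n(z)$. Consider the involution $\sigma$ on length-$n$ digit strings given by $(\varepsilon_1,\dots,\varepsilon_n)\mapsto(r-1-\varepsilon_1,\dots,r-1-\varepsilon_n)$: it interchanges $O_n$ with $I_n$ and is fixed-point free, since a fixed point would force $\varepsilon_j=\tfrac{r-1}{2}$. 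Hence $\sigma$ pairs the strings with $O_n>I_n$ bijectively with those having $O_n<I_n$, so precisely $r^n/2$ of the $r^n$ strings satisfy $O_n>I_n$; discarding the all-zeros string, which corresponds to $z=0$, yields $\#B^+_n=\tfrac{r^n}{2}-1=\tfrac{r^n-2}{2}$.

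Plugging this into the dimension formula gives $\dim_H A^+_n=\dfrac{\log(r^n-2)-\log2}{n\log r}$, and I would finish by the elementary bound $\log(r^n-2)\geq n\log r-2$, valid because $r^n\geq 8$ for $n\geq3$, $r\geq2$, so that $r^n-2\geq\tfrac34 r^n> e^{-2}r^n$; this gives $\dim_H A^+_n\geq 1-\dfrac{2+\log2}{n\log r}$. I do not anticipate any real obstacle: the transfer of the open-set-condition and attractor argument from Lemma~\ref{lemimpar} is routine, and the only delicate points are correctly handling the ``$z>0$'' exclusion --- this is exactly the $-1$ in $\#B^+_n$ --- and the trivial estimate needed to pass from the exact dimension to the stated lower bound.
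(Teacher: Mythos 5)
Your proposal is correct and follows essentially the same route as the paper, whose proof consists precisely of re-running the Moran/attractor argument of Lemma \ref{lemimpar} and replacing the count by $\# B^+_n=\frac{r^n-2}{2}$ in the even case. Your digit-complement involution justifying that count (and the elementary estimate $\log(r^n-2)\geq n\log r-2$) simply fills in details the paper leaves implicit, so there is nothing genuinely different to compare.
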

\begin{thm}
The set  
$\{ x:f'_r(x)=+\infty \}$ has Lebesgue measure zero and Hausdorff dimension one.
\end{thm}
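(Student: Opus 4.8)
The plan is to handle the two conclusions separately. The measure statement is almost immediate from the earlier sections: if $f'_r(x)=+\infty$ then in particular $f'^{+}_r(x)=+\infty$, so by Proposition~\ref{necesaria1} we get $\sum_{k=1}^{\infty}g'_k(x)=+\infty$ whenever $x\notin D\cup\widetilde D$; and no point of $D\cup\widetilde D$ can have two‑sided derivative $+\infty$, since by Proposition~\ref{prop1} (and the corresponding statement for $\widetilde D$ when $r$ is odd) the two lateral derivatives there are infinite with opposite signs. Hence $\{x:f'_r(x)=+\infty\}$ is contained, up to the countable set $D\cup\widetilde D$, in the set $A^+$ of Theorem~\ref{nullset}, so $\mathcal L(\{x:f'_r(x)=+\infty\})=0$. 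For the Hausdorff dimension the upper bound $\dim_H\le 1$ is trivial, while for the lower bound I would prove the inclusion
\[
A^+_n\subset\{x:f'_r(x)=+\infty\}
\]
for every odd $n\ge 3$, and likewise when $r$ is even; together with Lemma~\ref{lemimpar} (respectively Lemma~\ref{lempar}) this gives $\dim_H\{x:f'_r(x)=+\infty\}\ge\frac{(n-1)\log r-1}{n\log r}$ (respectively $\ge 1-\frac{2+\log 2}{n\log r}$), and letting $n\to\infty$ forces the dimension to be $1$.

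To prove the inclusion, fix $x=\sum\varepsilon_j r^{-j}\in A^+_n$ and regard its digit sequence as a concatenation of blocks $z_m$, where $z_m$ is made of the digits $\varepsilon_{nm+1},\dots,\varepsilon_{n(m+1)}$ and belongs to $B^+_n$. The conditions defining $B^+_n$ say that each $z_m$ has at least one nonzero digit, that its last digit is $\ne\frac{r-1}{2}$, and that $O_n(\varphi_r(z_m))-I_n(\varphi_r(z_m))\ge 1$. The first two facts exclude the all‑$0$, all‑$(r-1)$ and all‑$\frac{r-1}{2}$ blocks, hence $x\notin D\cup\widetilde D$ and Theorems~\ref{main+} and~\ref{main-} apply at $x$. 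Moreover, since every block ends with a digit $\ne\frac{r-1}{2}$, no maximal run of digits equal to $\frac{r-1}{2}$ can straddle a block boundary; therefore $g'_{nm+1}(x),\dots,g'_{n(m+1)}(x)$ depend only on the digits of $z_m$ and coincide with the signs read off from the first $n$ digits of $\varphi_r(z_m)$, so that
\[
\sum_{k=nm+1}^{n(m+1)}g'_k(x)=O_n(\varphi_r(z_m))-I_n(\varphi_r(z_m))\ge 1 .
\]
Summing over the first $m$ blocks gives $\sum_{k=1}^{nm}g'_k(x)\ge m$, and since the partial sums of $g'_k(x)$ change by at most $1$ at each step, $\sum_{k=1}^{N}g'_k(x)\ge \lfloor N/n\rfloor-n\to+\infty$.

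It remains to feed this into Theorems~\ref{main+} and~\ref{main-}, for which one controls the relevant index gaps. Every multiple of $n$ is an index with $\varepsilon\ne\frac{r-1}{2}$, so the increasing enumeration $\{i_m\}$ of $\{i:\varepsilon_i\ne\frac{r-1}{2}\}$ satisfies $i_{m+1}-i_m\le n$; and since each block carries a nonzero digit, the enumeration $\{n_k\}$ of $\{n:\varepsilon_n\ne 0\}$ satisfies $n_{k+1}-n_k\le 2n-1$. Using $\log_r t\ge 0$ for $t\ge 1$ we then get
\[
\sum_{k=1}^{i_m}g'_k(x)-(i_{m+1}-i_m)+\log_r(i_{m+1}-i_m)\ \ge\ \sum_{k=1}^{i_m}g'_k(x)-n\ \longrightarrow\ +\infty ,
\]
so $f'^{+}_r(x)=+\infty$ by Theorem~\ref{main+}, and similarly
\[
\sum_{n=1}^{n_k}g'_n(x)-(n_{k+1}-n_k)+\log_r(n_{k+1}-n_k)\ \ge\ \sum_{n=1}^{n_k}g'_n(x)-2n\ \longrightarrow\ +\infty ,
\]
so $f'^{-}_r(x)=+\infty$ by Theorem~\ref{main-}; hence $f'_r(x)=+\infty$. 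When $r$ is even the argument is identical but simpler, since $\varphi_r$ is the identity, $i_{m+1}-i_m=1$, and $B^+_n$ just requires $O_n-I_n\ge 1$ on each block.

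The step I expect to be the main obstacle is precisely the ``localization'' in the middle paragraph: one has to verify carefully that the requirement $\varepsilon_n\ne\frac{r-1}{2}$ at the end of every block is exactly what keeps a run of $\frac{r-1}{2}$'s from crossing a boundary, so that the within‑block quantity $O_n(\varphi_r(z_m))-I_n(\varphi_r(z_m))$ genuinely equals $\sum_{k=nm+1}^{n(m+1)}g'_k(x)$, and that the two index sequences $\{i_m\}$ and $\{n_k\}$ have gaps bounded by $n$ and $2n-1$ respectively. Once these combinatorial facts are in place, Theorems~\ref{main+} and~\ref{main-} together with Lemmas~\ref{lemimpar} and~\ref{lempar} complete the proof.
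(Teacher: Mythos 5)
Your proposal is correct and follows essentially the same route as the paper: the measure statement via Theorem \ref{nullset} and Proposition \ref{necesaria1} (the paper cites Corollary \ref{corolariolaterales}), and the dimension via the inclusion of $A^+_n$ (up to the countable set $D\cup\widetilde D$) into $\{x:f'_r(x)=+\infty\}$, using the block-sum bound, the gap bounds $i_{m+1}-i_m\le n$, $n_{k+1}-n_k<2n$ fed into Theorems \ref{main+} and \ref{main-}, and Lemmas \ref{lemimpar}, \ref{lempar} with $n\to\infty$. Only a trivial slip: the all-$(r-1)$ block is excluded by the condition $O_n(\varphi_r(z))-I_n(\varphi_r(z))\ge 1$, not by the two conditions you cite, but this does not affect the argument.
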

\begin{proof}
It is immediate to see that the Lebesgue measure is zero from Corollary \ref{corolariolaterales}. Concerning the Hausdorff dimension, this is a consequence of the fact that 
$$
\left( A^+_n\smallsetminus (D\cup \widetilde{D})\right) \subset \left \{ x:f_r'(x)=+\infty \right \}
$$
for every $n$. Indeed, if $x\notin D\cup \widetilde{D}$ belongs to $A^+_n$ then 
$$
\sum_{j=1}^{\infty}g'_j(x)=\sum_{k=0}^{\infty}\sum_{j=kn+1}^{n(k+1)}g'_j(x)\geq
\sum_{k=0}^{\infty}1=+\infty.
$$
On the other hand, with the notation of Theorems \ref{main+} and \ref{main-},  if $r$ is odd we have that
$i_{m+1}-i_m\leq n$ and 
$n_{m+1}-n_m<2n$ for every $m$, meanwhile $i_{m+1}-i_m=1$ and $n_{m+1}-n_m<2n$ when $r$ is even. Finally, the result follows from Lemmas \ref{lemimpar} and \ref{lempar}.
\end{proof}

By defining the sets $B_n^-$, $A_{n,k}^-$ and $A_n^-$ in a similar way, the same arguments allow us to obtain the following result.
\begin{thm}
The set  
$\{ x:f'_r(x)=-\infty \}$ has Lebesgue measure zero and Hausdorff dimension one.
\end{thm}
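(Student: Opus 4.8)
The plan is to transport the argument for $\{x:f'_r(x)=+\infty\}$ across the reflection $S$, either by a one-line symmetry argument or, as the remark above suggests, by reproducing the whole construction. For the quick route: since $f_r(x)=f_r(1-x)$ one has $f'_r(x)=-f'_r(1-x)$ in the extended sense, so $f'_r(x)=-\infty$ exactly when $f'_r(1-x)=+\infty$; hence $\{x:f'_r(x)=-\infty\}=S\bigl(\{x:f'_r(x)=+\infty\}\bigr)$, and since $S$ is an isometry of $\R$ it preserves both Lebesgue measure and Hausdorff dimension, so the statement is immediate from the previous theorem.

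If instead one wants the explicit construction, I would put
$$
B^-_n=\Bigl\{z=\sum_{j=1}^n\tfrac{\varepsilon_j}{r^j}<1:\ I_n(\varphi_r(z))-O_n(\varphi_r(z))\geq 1,\ \varepsilon_n\neq\tfrac{r-1}2\Bigr\},
$$
and then define $A^-_{n,k}$ and $A^-_n=\bigcap_{k\geq0}A^-_{n,k}$ exactly as for $B^+_n$. Here $z<1$ plays the role that $z>0$ played for $B^+_n$: it forbids a block of length $n$ from being constantly equal to $r-1$, so that for $x\in A^-_n$ the gaps between consecutive indices $p_n$ with $\varepsilon_{p_n}\neq r-1$ stay below $2n$, while $\varepsilon_n\neq\frac{r-1}2$ keeps the gaps between consecutive indices $i_n$ with $\varepsilon_{i_n}\neq\frac{r-1}2$ at most $n$ when $r$ is odd and equal to $1$ when $r$ is even. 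The inequality $I_n-O_n\geq1$ makes each block contribute at most $-1$ to $\sum_j g'_j(x)$, so $\sum_{j\geq1}g'_j(x)=-\infty$ and the partial sums $\sum_{k\leq m}g'_k(x)$ tend to $-\infty$ with bounded gaps; plugging this into the two items of the corollary that characterizes $f'^+_r(x)=-\infty$ and $f'^-_r(x)=-\infty$ shows both one-sided derivatives equal $-\infty$, hence $A^-_n\smallsetminus(D\cup\widetilde D)\subset\{x:f'_r(x)=-\infty\}$.

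For the dimension, the similarities $\varphi_d(x)=d+x/r^n$, $d\in B^-_n$, satisfy Moran's open set condition with attractor $A^-_n$, so $A^-_n$ has Hausdorff dimension $\frac{\log\#B^-_n}{n\log r}$; the digitwise mirror $\varepsilon_j\mapsto(r-1)-\varepsilon_j$ identifies $B^-_n$ with a set of the same cardinality as $B^+_n$, so $\#B^-_n=\frac{r^{n-1}(r-1)}2-1$ for $r$ odd and $\frac{r^n-2}2$ for $r$ even, and Lemmas \ref{lemimpar} and \ref{lempar} give the same lower bounds for $\dim_H A^-_n$. Since $D\cup\widetilde D$ is countable, $A^-_n\smallsetminus(D\cup\widetilde D)$ has the same Hausdorff dimension as $A^-_n$, and letting $n\to\infty$ forces $\{x:f'_r(x)=-\infty\}$ to have Hausdorff dimension $1$; it is Lebesgue-null by Corollary \ref{corolariolaterales}.

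I expect no real obstacle: the content is entirely parallel to the $+\infty$ case. The one point demanding care is to verify that the digit conditions defining $B^-_n$ yield exactly the gap bounds required by the corollary characterizing the lateral $-\infty$ derivatives — this is the mirror of the $z>0$ / $\varepsilon_n\neq\frac{r-1}2$ bookkeeping in the $+\infty$ case, and a parity or sign slip, if any, would hide there.
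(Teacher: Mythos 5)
Your first, symmetry route is correct and is genuinely different from what the paper does: the paper's own proof is to redo the whole construction with mirrored sets $B^-_n$, $A^-_{n,k}$, $A^-_n$ and invoke "the same arguments", whereas you observe that since $g_n(1-x)=g_n(x)$ for every $n$, hence $f_r(1-x)=f_r(x)$, the reflection $S(x)=1-x$ satisfies $f'^{\pm}_r(1-x)=-f'^{\mp}_r(x)$ in the extended sense, so $\{x:f'_r(x)=-\infty\}=S\bigl(\{x:f'_r(x)=+\infty\}\bigr)$; as $S$ is an isometry it preserves Lebesgue measure and Hausdorff dimension and the theorem follows at once from the $+\infty$ case. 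This is cleaner than the paper's route (no new attractor, no recount of $\#B^-_n$), and it works precisely because the statement concerns the two-sided derivative, so the swap of left and right lateral derivatives under $S$ is harmless; the paper's explicit construction, on the other hand, is what you would need if you wanted the $-\infty$ analogue of the one-sided results as well.

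In your second, explicit route there is the sign slip you yourself anticipated: the condition $z<1$ in your definition of $B^-_n$ is vacuous, since every block sum satisfies $z=\sum_{j=1}^n\varepsilon_j r^{-j}\leq 1-r^{-n}<1$; in particular the all-$(r-1)$ block passes all three of your tests ($I_n-O_n=n\geq 1$, $\varepsilon_n=r-1\neq\frac{r-1}2$, $z<1$). The correct mirror of ``$z>0$'' is ``$z<1-r^{-n}$'', i.e.\ the block is not constantly $r-1$. With your literal definition the gaps $p_{m+1}-p_m$ between digits $\neq r-1$ need not be bounded by $2n$ (take sparse non-$(r-1)$ blocks), the criterion of the corollary for $f'^+_r(x)=-\infty$ can fail, and the inclusion $A^-_n\smallsetminus(D\cup\widetilde D)\subset\{x:f'_r(x)=-\infty\}$ breaks; also $\#B^-_n$ would exceed $\#B^+_n$ by one, spoiling the exact mirror count. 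Once $z<1$ is replaced by $z<1-r^{-n}$, that route matches the paper's intended proof. Since your primary symmetry argument already proves the theorem, this is a repairable blemish in the backup argument rather than a gap in the proposal.
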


\end{document}